\theoremstyle{definition}
\newtheorem{theorem}{Theorem}[section]
\newtheorem{lemma}[theorem]{Lemma}
\newtheorem{proposition}[theorem]{Proposition}
\newtheorem{corollary}[theorem]{Corollary}
\newtheorem{definition}[theorem]{Definition}
\newtheorem{remark}[theorem]{Remark}
\def\Fq{{\mathbb F}_q}
\def\I{\mathcal{I}}
\def\CC{\mathcal{C}}
\def\DD{W}
\newcommand{\WW}{W}
\def\imod#1{\allowbreak\mkern10mu({\operator@font mod}\,\,#1)}
\title{Enumerating partial linear transformations in a similarity class} 
\author{Akansha Arora}
\address{Indraprastha Institute of Information Technology Delhi (IIIT-Delhi), New Delhi 110020, India.}
\email{akanshaa@iiitd.ac.in}
\author{Samrith Ram}
\address{Indraprastha Institute of Information Technology Delhi (IIIT-Delhi), New Delhi 110020, India.}
\email{samrith@gmail.com}  
\keywords{finite field, conjugacy, similarity,  invariant factors, integer partition, Durfee square} 
\subjclass[2020]{05A05, 05A10, 15B33}
\begin{document}
\begin{abstract}
Let $V$ be a finite-dimensional vector space over the finite field ${\mathbb F}_q$ and suppose $W$ and $\widetilde{W}$ are subspaces of $V$. Two linear transformations $T:W\to V$ and $\widetilde{T}:\widetilde{W}\to V$ are said to be similar if there exists a linear isomorphism $S:V\to V$ with $SW=\widetilde{W}$ such that $S\circ T=\widetilde{T}\circ S $. Given a linear map $T$ defined on a subspace $W$ of $V$, we give an explicit formula for the number of linear maps that are similar to $T$. Our results extend a theorem of Philip Hall that settles the case $W=V$ where the above problem is equivalent to counting the number of square matrices over ${\mathbb F}_q$ in a conjugacy class. 
\end{abstract}

\maketitle
\tableofcontents   

\section{Introduction}


Denote by $\Fq$ the finite field with $q$ elements where $q$ is a prime power. Let $\Fq[x]$ denote the ring of polynomials over $\Fq$ in the indeterminate $x$. Throughout this paper $n$ and $k$ denote nonnegative integers. A partition of a nonnegative integer $n$ is a sequence $\lambda=(\lambda_1,\lambda_2,\ldots)$ of nonnegative integers with $\lambda_i\geq \lambda_{i+1}$ for $i\geq 1$ and $\sum_i \lambda_i=n$. If $\lambda_{\ell+1}=0$ for some integer $\ell$, we also write $\lambda=(\lambda_1,\ldots,\lambda_\ell)$. The notation $\lambda \vdash n$ or $|\lambda|=n$ will mean that $\lambda$ is a partition of the integer $n$.

Let $V$ be an $n$ dimensional vector space over $\Fq$ and let $W$ be a subspace of $V$. Let $L(W,V)$ denote the vector space of all $\Fq$-linear transformations from $W$ to $V$. Two linear transformations  $T \in L(W,V)$ and $\widetilde{T} \in L(\widetilde{W},V)$ defined on subspaces $W$ and $\widetilde{W}$ of $V$ respectively are similar if there exists a linear isomorphism $S:V\to V$ 
such that the following diagram commutes: 
\[
\begin{tikzcd}
W \arrow{r}{T} \arrow[swap, twoheadrightarrow]{d}{S} & V \arrow{d}{S}[swap]{\simeq} \\
\widetilde{W} \arrow{r}{\widetilde{T}} & V
\end{tikzcd}.
\]
Let $\mathcal{L}(V)$ denote the union of the vector spaces $L(W,V)$ as $W$ varies over all possible subspaces of $V$. Given $T\in \mathcal{L}(V)$ define $\CC(T)$, the conjugacy class of $T$, by
$$
\CC(T):=\{\widetilde{T}: \widetilde{T}\in \mathcal{L}(V) , \mbox{ } \widetilde{T} \mbox{ is similar to } T\}.
$$

We are interested in determining the cardinality of $\mathcal{C}(T)$ for an arbitrary linear map $T$. The case where $T$ is a linear operator on $V$ is well-studied. Given such a linear operator $T$, one can view $V$ as an  $\Fq [x]$-module where the element $x$ acts on $V$ as the linear transformation $T$. By the structure theorem for modules over a principal ideal domain \cite[p. 86]{MR0369381}, $V$ is isomorphic to a direct sum    
 $$ V \simeq \bigoplus_{i=1}^{r} \frac{\Fq [x]}{(p_i)} $$   
of cyclic modules where $p_1,p_2, \ldots, p_r$ are monic polynomials of degree at least one over $\Fq$ with $p_i$ dividing  $p_{i+1}$ for $1\leq i\leq r-1$.
The $p_i$ are known as the invariant factors of $T$ and uniquely determine $T$ upto similarity;
two linear operators $T$ and $\widetilde{T}$ on $V$ are similar if and only if they have the same invariant factors. In this case the problem of determining $|\CC(T)|$ is equivalent to counting the number of square matrices over $\Fq$ in a conjugacy class. An explicit formula  \cite[Eq. 1.107]{Stanley2012} for the size of $\CC(T)$ for a linear operator $T$ was given by Philip Hall based on earlier work by Frobenius. This problem has also been studied by Kung \cite{Kung1981} and Stong~\cite{Stong1988} who employ a generating function approach. In particular, Kung introduced a vector space cycle index which is an analog of the Pólya cycle index and can be used to enumerate many classes of square matrices over a finite field. We refer to the survey article of Morrison \cite{MR2217227} for more on this topic.   
The invariant factors $p_i$ of a linear operator $T$ appear as the nonunit diagonal entries in the Smith Normal Form \cite[p. 257]{MR0276251} of $xI-A$ where $A$ is the matrix of $T$ with respect to some ordered basis for $V$. 

In this paper we determine the size of the similarity class $\CC(T)$ for an arbitrary transformation $T\in \mathcal{L}(V)$. Our methods are mostly combinatorial and we use ideas from the theory of integer partitions. The first step is to characterize the similarity invariants for a linear transformation $T$ defined only on a subspace $W$ of an $n$-dimensional vector space $V$. Accordingly, let $T\in \mathcal{L}(V)$ be a linear transformation and let $U$ denote the maximal $T$-invariant subspace with $\dim U=d$. Interestingly, in this case the similarity classes are indexed by pairs $(\lambda,\mathcal{I})$ where $\lambda$ is an integer partition of $n-d$ and $\mathcal{I}$ is an ordered set of monic polynomials corresponding to the invariant factors of the restriction of $T$ to $U$. The precise details are in Section~\ref{sec:invariants}. When the domain of $T$ is all of $V$, the partition $\lambda$ above is empty and the similarity class $\mathcal{C}(T)$ is completely determined by the invariant factors of $T$. We prove (Corollary \ref{cor:simsize}) that the size of the conjugacy class corresponding to the pair $(\lambda,\I)$ is given by
$$
 \left| \CC(\lambda,\I )\right| = q^{d(k-d)+\sum_{i\geq 2} \lambda_i^2}  \left|\CC(\I)\right| {n \brack k}_q {k \brack d}_q   \mathlarger\prod_{i\geq 1}{\lambda_i \brack \lambda_{i+1}}_q \prod_{i=0}^{k-d-1} (q^{k-d}-q^i),
$$
where $k=n-\lambda_1$ and ${\cdot \brack \cdot}_q$ denotes a $q$-binomial coefficient while $|\CC(\I)|$ denotes the number of square matrices in the conjugacy class specified by $\I$. In fact Hall's result on matrix conjugacy class size may be recovered from Theorem~\ref{th:givenU} as well as Corollaries \ref{cor:sizeofcwv} and \ref{cor:simsize} by setting $\lambda$ to be the empty partition.  

While the problem of estimating similarity class sizes in $\mathcal{L}(V)$ seems quite natural and is an interesting combinatorial problem in its own right, it also has some connections with mathematical control theory. As a consequence of our results we give another proof of a theorem of Lieb, Jordan and Helmke \cite[Thm.~1]{Helmkeetal2015} which is related to the problem of counting the number of zero kernel pairs of matrices or, equivalently, reachable linear systems over a finite field. This problem was initially considered by Koci\textpolhook{e}cki and Przyłuski~\cite{MR1019984}. The reader is referred to \cite{zerokernel,RAM2017146} for the definition of zero kernel pairs and the connections with control theory.      

\section{Similarity invariants for maps defined on a subspace} 
\label{sec:invariants}

We begin by describing a complete set of similarity invariants for a linear map in $L(W,V)$. Given $T\in L(W,V)$, define a sequence of subspaces \cite[sec. III.1]{Partially} $\DD_i=\DD_i(T)(i\geq 0)$ by $\DD_0=V, \DD_1=W$ and
\begin{align*} 
  \DD_{i+1}=\DD_i\cap T^{-1}(\DD_i)=\{v\in \DD_i:Tv\in \DD_i\} \quad \mbox{ for }i\geq 1.
\end{align*}
 The descending chain of subspaces $\DD_0\supseteq \DD_1\supseteq \cdots$ eventually stabilizes as the dimensions of the subspaces are nonnegative integers. Let $d_i=d_i(T):=\dim \DD_i$ for $i\geq 0$ and let
$$
\ell=\ell(T):=\min \{i:\DD_i=\DD_{i+1}\}.
$$
The subspace $\DD_{\ell}$ is clearly a $T$-invariant subspace which is evidently the maximal $T$-invariant subspace. 
Therefore the restriction $T_{W_\ell}$ of $T$ to $\DD_\ell$ is a linear operator on $W_\ell$. Denote by $\I_T$ the ordered set of invariant factors of $T_{W_\ell}$. 
Since the characteristic polynomial of $T_{W_\ell}$ equals the product of the invariant factors of $T_{W_\ell}$, it follows that
$$
d_{\ell}=\deg\prod_{p\in \I_T}p.
$$
Now define
$$
\lambda_j=\lambda_j(T):=d_{j-1}-d_{j} \mbox{ for } 1\leq j \leq \ell.
$$
\begin{definition}
The integers $\lambda_j(T)(1\leq j\leq \ell)$ are called the \emph{defect dimensions} \cite[p. 52]{Partially} of $T$. 
\end{definition}

\begin{lemma}
  For any $T\in \mathcal{L}(V)$, we have $\lambda_j(T)\geq \lambda_{j+1}(T)$ for $ 1\leq j\leq \ell-1$.
\end{lemma}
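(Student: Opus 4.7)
The plan is to show that $T$ induces an injective linear map
\[
\bar{T}_j: W_j/W_{j+1} \longrightarrow W_{j-1}/W_j
\]
for each $j\geq 1$. Since $\lambda_j = d_{j-1}-d_j = \dim(W_{j-1}/W_j)$ and $\lambda_{j+1}=\dim(W_j/W_{j+1})$, the existence of such an injection immediately yields $\lambda_{j+1}\leq \lambda_j$.

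First I would unpack what membership in the chain $W_0 \supseteq W_1 \supseteq \cdots$ means: by the recursive definition, $v\in W_j$ for $j\geq 1$ is equivalent to $v\in W_{j-1}$ together with $Tv\in W_{j-1}$. This immediately gives $T(W_j)\subseteq W_{j-1}$, so restriction of $T$ defines a linear map $W_j\to W_{j-1}$. Composing with the quotient map $W_{j-1}\to W_{j-1}/W_j$ yields a linear map $\varphi_j: W_j\to W_{j-1}/W_j$.

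Next I would verify that $\varphi_j$ factors through $W_j/W_{j+1}$. If $v\in W_{j+1}$ then, by definition, $v\in W_j$ and $Tv\in W_j$, so $\varphi_j(v)=[Tv]=0$ in $W_{j-1}/W_j$. Hence $\varphi_j$ descends to the desired map $\bar{T}_j : W_j/W_{j+1}\to W_{j-1}/W_j$. Conversely, if $\bar{T}_j([v])=0$ for some $v\in W_j$, then $Tv\in W_j$, and since already $v\in W_j\subseteq W_{j-1}$ and $Tv\in W_j\subseteq W_{j-1}$, we conclude $v\in W_{j+1}$ using $W_{j+1}=W_j\cap T^{-1}(W_j)$. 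Therefore $[v]=0$ and $\bar{T}_j$ is injective.

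I do not expect any real obstacle here: the result rests on the precise recursive definition of the $W_i$, and the only subtle point is to keep the indices straight (and to note that the argument only needs $j\geq 1$, which is exactly the range in the statement). Taking dimensions on each side of the injection $\bar{T}_j$ gives
\[
\lambda_{j+1} \;=\; \dim(W_j/W_{j+1}) \;\leq\; \dim(W_{j-1}/W_j) \;=\; \lambda_j
\]
for every $1\leq j\leq \ell-1$, completing the proof.
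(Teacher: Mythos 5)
Your proof is correct and follows essentially the same route as the paper: both construct the injective map $W_j/W_{j+1}\to W_{j-1}/W_j$ induced by $T$ and compare dimensions (the paper defines it directly on the quotient and checks well-definedness, while you factor it through the quotient, which is the same argument). No gaps; the minor point that for $j=1$ the containment $T(W_1)\subseteq W_0=V$ is trivial rather than an instance of the recursion does not affect anything.
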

\begin{proof}
Let the subspaces $W_j(j\geq 1)$ be as above. Note that $T(W_j)\subseteq W_{j-1}$ for each $j$. Fix $j\geq 1$ and define a map $\varphi: \DD_j/ \DD_{j+1} \to \DD_{j-1}/ \DD_j$ by
  $$
\varphi(v+  \DD_{j+1})=Tv+ \DD_j.
$$
We claim that $\varphi$ is well defined. Suppose $v_1+\DD_{j+1}=v_2+ \DD_{j+1}$ for some $v_1,v_2\in W_j$. Then $v_1-v_2\in \DD_{j+1}$ and consequently $T(v_1-v_2)\in \DD_{j}$. Therefore $Tv_1+\DD_j=Tv_2+\DD_j$ proving that $\varphi$ is well defined. The linearity of $\varphi$ follows easily from the fact that $T$ is linear. In fact $\varphi$ is also injective. Suppose for some $v\in \DD_j$ we have $$\varphi(v+\DD_{j+1})=Tv+ \DD_j=0+\DD_j.$$
Then $Tv\in \DD_j$ and since $v$ itself lies in $\DD_j$, it follows that $v\in \DD_{j+1}$ as well. Thus $v+ \DD_{j+1}$ is the zero vector and $\varphi$ is injective. The injectivity of $\varphi$ implies that $ \dim (\DD_{j-1}/\DD_{j})\geq \dim (\DD_j/\DD_{j+1})$, or equivalently, $\lambda_j\geq \lambda_{j+1}$ for $1\leq j\leq \ell -1$.
\end{proof}
Hereon the sequence $W_i(T)(i\geq 0)$ will be referred to as the \emph{chain of subspaces} associated with $T$.
\begin{corollary}
For $T\in \mathcal{L}(V)$, let $\ell=\ell(T)$. The sequence $\lambda_T=(\lambda_1(T),\ldots,\lambda_{\ell}(T))$ is an integer partition of $n-d_{\ell}(T)$.
\end{corollary}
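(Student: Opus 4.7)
The plan is to verify the three defining properties of an integer partition for the sequence $\lambda_T = (\lambda_1(T), \ldots, \lambda_\ell(T))$, namely nonnegativity, weak decrease, and that the parts sum to $n - d_\ell(T)$.

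First I would observe that nonnegativity of each $\lambda_j = d_{j-1} - d_j$ is immediate from the construction of the chain of subspaces: by definition $W_{i+1} = W_i \cap T^{-1}(W_{i-1}) \subseteq W_i$, so the chain $W_0 \supseteq W_1 \supseteq \cdots$ is descending, and hence $d_{j-1} \geq d_j$ for all $j \geq 1$. Next, the weak-decrease condition $\lambda_j \geq \lambda_{j+1}$ for $1 \leq j \leq \ell - 1$ is exactly the content of the preceding lemma, so there is nothing further to prove for that part.

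It then remains to compute the sum. Since the $\lambda_j$'s telescope, I would write
\[
\sum_{j=1}^{\ell} \lambda_j(T) = \sum_{j=1}^{\ell} (d_{j-1} - d_j) = d_0 - d_\ell = n - d_\ell(T),
\]
using $d_0 = \dim V = n$. This shows $\lambda_T \vdash n - d_\ell(T)$, completing the proof.

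There is no real obstacle here; the result is essentially bookkeeping, with the nontrivial ingredient (monotonicity) already established in the preceding lemma. The only point worth emphasizing is that the sum truncates cleanly at index $\ell$ because of the telescoping, which is why $n - d_\ell(T)$ appears on the right.
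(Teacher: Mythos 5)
Your proof is correct and follows the same route as the paper, which simply invokes the preceding lemma for the weak-decrease condition and notes the telescoping sum $\sum_{j=1}^{\ell}\lambda_j = d_0 - d_\ell = n - d_\ell$. (Minor slip: the recursion is $W_{i+1} = W_i \cap T^{-1}(W_i)$, not $W_i \cap T^{-1}(W_{i-1})$, but this does not affect your argument since $W_{i+1}\subseteq W_i$ either way.)
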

\begin{proof}
  This follows since $\sum_{i=1}^{\ell}\lambda_i=d_0-d_\ell=n-d_\ell$.
\end{proof}
We will prove that the pair $(\lambda_T,\I_T)$ completely determines the similarity class of a linear transformation $T$ in the sense that two maps $T,\widetilde{T}\in \mathcal{L}(V)$
 are similar if and only if $\lambda_{T}=\lambda_{\widetilde{T}}$ and $\I_{T}=\I_{\widetilde{T}}$. We require a lemma \cite[Ch. III Lem. 3.3]{Partially} to prove this result. As the terminology in \cite{Partially} differs considerably from that in this paper, we include a proof here for the sake of completeness.

 \begin{lemma}
   \label{lem:restriction}
  Let $W,\widetilde{W}$ be subspaces of $V$. For $T \in L(W,V)$ and  $ \widetilde {T}  \in L(\widetilde{W} ,V)$, let $T_{U}$ and $\widetilde{T}_{\widetilde{U}}$ denote the restrictions of $T$ and $\widetilde{T}$ to the subspaces
  $$U =\{ v \in W: Tv \in W \} \mbox{ and } \widetilde{U}=\{ v \in \widetilde{W}:\widetilde{T}v \in \widetilde{W} \}$$
  respectively. Then $T$ is similar to $\widetilde{T}$ if and only if $T_U$ is similar to $\widetilde{T}_{\widetilde{U}}$ and $\dim W= \dim \widetilde{W}$.
\end{lemma}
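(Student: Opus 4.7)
The plan is to handle the two implications separately; the forward direction is routine while the reverse requires a careful construction. For the forward direction, assume $S:V\to V$ is an isomorphism with $SW=\widetilde{W}$ and $S\circ T=\widetilde{T}\circ S$. I would first show $S(U)=\widetilde{U}$: for $v\in U$, both $Sv\in\widetilde{W}$ and $\widetilde{T}(Sv)=S(Tv)\in\widetilde{W}$, so $Sv\in\widetilde{U}$, and the reverse containment follows by running the same argument for $S^{-1}$. The same $S$ then witnesses the similarity of $T_U$ and $\widetilde{T}_{\widetilde{U}}$, and $\dim W=\dim\widetilde{W}$ is immediate.

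For the reverse direction, start from an isomorphism $S_0:V\to V$ with $S_0(U)=\widetilde{U}$ and $S_0\circ T_U=\widetilde{T}_{\widetilde{U}}\circ S_0|_U$. I would build the desired $S$ piecewise on a direct sum decomposition. After choosing a complement $X$ of $U$ in $W$, the defining property of $U$ forces $T|_X$ to be injective with $T(X)\cap W=\{0\}$ (any $x\in X$ with $Tx\in W$ lies in $U\cap X=\{0\}$). Together with the analogous tilde-side picture and the dimension hypothesis, this yields $V=U\oplus X\oplus T(X)\oplus Y$ and $V=\widetilde{U}\oplus\widetilde{X}\oplus\widetilde{T}(\widetilde{X})\oplus\widetilde{Y}$ with $\dim X=\dim\widetilde{X}$ and $\dim Y=\dim\widetilde{Y}$.

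The main obstacle—and the real content of the argument—is that the intertwining $S\circ T=\widetilde{T}\circ S$ on $W$ forces $S$ to coincide with $S_0$ not only on $U$ but also on the projection $X_0$ of $T(U)$ onto $X$ along $U$. The key observation that overcomes this is that for $x=Tu-u'\in X_0$ with $u,u'\in U$, the $U$-intertwining yields $S_0x=\widetilde{T}_{\widetilde{U}}(S_0u)-S_0u'\in\widetilde{W}$, while injectivity of $S_0$ together with $X\cap U=\{0\}$ gives $S_0(X_0)\cap\widetilde{U}=\{0\}$. These two facts let me \emph{choose} the complement $\widetilde{X}$ of $\widetilde{U}$ in $\widetilde{W}$ to contain $S_0(X_0)$, and then extend $S_0|_{X_0}$ to an isomorphism $\sigma:X\to\widetilde{X}$.

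With $\sigma$ in hand, define $S|_U=S_0|_U$, $S|_X=\sigma$, $S(Tx)=\widetilde{T}(\sigma x)$ for $x\in X$, and an arbitrary isomorphism $Y\to\widetilde{Y}$ on the last summand. Bijectivity, $SU=\widetilde{U}$, and $SW=\widetilde{W}$ follow from the construction. For the intertwining, decompose $v=u+x\in W$ with $u\in U$, $x\in X$: the $Tx$ contribution holds by the definition of $S$, while for the $Tu$ contribution, writing $Tu=u'+x'$ with $u'\in U$ and $x'\in X_0$, the crucial agreement $S|_{X_0}=S_0|_{X_0}$ gives $S(Tu)=S_0(u')+S_0(x')=S_0(Tu)=\widetilde{T}(S_0u)=\widetilde{T}(Su)$, completing the verification.
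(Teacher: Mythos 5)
Your proof is correct, and both directions rest on the same underlying observation as the paper's argument: any similarity extending $S_0$ is forced on $U+TU$ (it must agree with $S_0$ there) and forced on $TW$ (by the intertwining relation), while it is free on a complement of $U+TU$ in $W$ and on a complement of $W+TW$ in $V$. The difference is in the bookkeeping. The paper builds $S$ incrementally through the chain $U+TU\subseteq W\subseteq W+TW\subseteq V$: it first sets $S''=S'$ on $U+TU$, extends arbitrarily to an isomorphism $W\to\widetilde W$, then defines $S$ on $TW$ by $S(Tv)=\widetilde T(S''v)$ and checks that this is consistent on $W\cap TW$ (which reduces to the intertwining on $U$). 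You instead fix an explicit direct-sum decomposition $V=U\oplus X\oplus T(X)\oplus Y$, isolate the consistency constraint as the requirement $\sigma|_{X_0}=S_0|_{X_0}$ where $X_0$ is the projection of $T(U)$ onto $X$, and verify that $S_0(X_0)$ lands in $\widetilde W$ transversally to $\widetilde U$ so that a suitable complement $\widetilde X$ exists. Your version costs a little more setup but makes bijectivity of $S$ and the well-definedness on the ``overlap'' completely transparent (each summand is mapped isomorphically onto the corresponding summand on the tilde side), whereas the paper's extension argument leaves the reader to check that the piecewise definition on $W+TW$ really assembles into an injection; in that sense your write-up is the more airtight of the two. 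Both are valid proofs of the lemma.
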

\begin{proof} 
  First suppose that $T$ is similar to $\widetilde{T}$. Then there exists a linear isomorphism $S:V \rightarrow V$ such that $SW = \widetilde{W}$ and $S \circ T = \widetilde{T} \circ S$. It follows that dim $W=$ dim $\widetilde{W}$. We claim that $T_U$ is similar to $\widetilde{T}_{\widetilde{U}}$ with respect to the same linear isomorphism $S$. To see this, we first show that  $S$ maps $U$ onto $\widetilde{U}$. Suppose $v \in U$. Then, by definition, $ v \in  W $ and $Tv \in  W$. This implies that $Sv \in \widetilde{W} $ and consequently $\widetilde{T}\circ Sv= S\circ Tv \in \widetilde{W}$ which further implies that $Sv \in \widetilde{U}$. Thus $S U \subseteq \widetilde{U}$. 
  Now the isomorphism $S^{-1}:V\to V$ has the property that $S^{-1}\widetilde{W}=W$ and $S^{-1}\circ \widetilde{T}=T\circ S^{-1}$. By reasoning as above it follows that $S^{-1}\widetilde{U}\subseteq U$. It follows that $SU=\widetilde{U}$. Now since $T_{U}$ and $\widetilde{T}_{\widetilde{U}}$ are restrictions of $T$ and $\widetilde{T}$ to $U$ and $\widetilde{U}$ respectively, it is easy to see that $S\circ T_U = \widetilde{T}_{\widetilde{U}}\circ S$ and it follows that $T_U$ and $\widetilde{T}_{\widetilde{U}}$ are similar.
  
  For the converse, suppose dim $W=$ dim $\widetilde{W}$ and $T_U$ is similar to $\widetilde{T}_{\widetilde{U}}$. This implies that there exists a linear isomorphism $S' \in GL(V) $ such that $S'\circ U= \widetilde{U}$ and $S'\circ T_U = \widetilde{T}_{\widetilde{U}}\circ S'$. First construct a linear isomorphism $S '' \in GL(V)$ such that $S ''W = \widetilde{W}$ and $S ''\circ T_U = \widetilde{T}_{\widetilde{U}}\circ S''$. Note that $T(U) \subseteq W$. We simply set $S''v =S'v$  for all $v$ lying in the subspace $U + TU$ of $W$. Since $S'(u_1 + Tu_2)=S'u_1 + S'\circ T_U u_2=S'u_1 + \widetilde{T}_{\widetilde{U}}\circ S' u_2 \in \widetilde{U} + \widetilde{T}\widetilde{U}$, it is clear that $S'': U + TU \rightarrow \widetilde{U} + \widetilde{T}\widetilde{U}$ is an isomorphism. Since dim $W=$ dim $\widetilde{W}$, we may extend the definition of $S''$ to all of $W$ to obtain a linear isomorphism $S'':W \rightarrow \widetilde{W}$ which may be further extended to a linear isomorphism $S'':V \rightarrow V$.

  Now we use $S''$ to construct another linear isomorphism $S:V\to V$ such that $SW = \widetilde{W}$ and $S\circ T = \widetilde{T}\circ S$ which will imply that the linear transformtions $T$ and $\widetilde{T}$ are similar. Let $Sv= S'' v$  for any $v \in W$ and let $Sv'= \widetilde{T}\circ S'' v$  for any $v'=Tv \in TW$. We assert that $S: W + TW \rightarrow \widetilde{W} + \widetilde{T}\widetilde{W}$ is well defined and a linear isomorphism. If $v'=Tv$ lies in $ W$, then $v \in U$ and hence $  S''v' = S''\circ Tv = S''\circ T_U v= \widetilde{T}_{\widetilde{U}}\circ S''v = \widetilde{T}\circ S''v $. Therefore $Sv'$ is uniquely defined.  If $Tv = Tu$ for some $v,u \in W$, then $T(v-u)=0$ lies in $W$. Thus, $ S\circ T(v-u)=S''\circ T(v-u)=0$ which further implies $ \widetilde{T}\circ S'v - \widetilde{T}\circ S'u = \widetilde{T}\circ S'(v-u)=0$, and hence $S\circ Tv=S\circ Tu$. This implies that $S$ is well defined. To prove that $S$ is injective, let $v'= Tv$ for some $v \in W$ and $ Sv'=0$. This implies $Sv'=\widetilde{T}\circ S''v=0$ which further implies $ S''\circ Tv=0$ and since $S''$ is invertible, it follows $v'=0$. It is easy to check that $S$ is surjective and $S\circ T = \widetilde{T}\circ S$. Furthermore, it can be extended to a linear isomorphism $S:V \rightarrow V$. This completes the proof. 
\end{proof} 
\begin{proposition}
\label{prop:similarity}
 The linear transformations $T \in L(W,V)$ and $\widetilde{T}  \in L(\widetilde{W},V)$ are similar if and only if $\lambda_{T}= \lambda_{\widetilde{T} }$ and $\I_{T}= \I_{\widetilde{T} }$.
\end{proposition}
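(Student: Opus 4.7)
The plan is to treat the two directions separately. The forward direction is handled directly from the intertwining relation, while for the converse I would iterate Lemma~\ref{lem:restriction} along the chain $\{W_i(T)\}$.

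For the forward direction, suppose an isomorphism $S:V\to V$ witnesses $T \sim \widetilde{T}$. I would prove by induction on $i$ that $SW_i(T) = W_i(\widetilde{T})$ for all $i \geq 0$. The cases $i=0,1$ are immediate from $SV=V$ and $SW=\widetilde{W}$, and the inductive step reduces to the subspace-level identity $ST^{-1}(A) = \widetilde{T}^{-1}(SA)$ for any subspace $A \subseteq V$, which follows quickly from $S\circ T = \widetilde{T}\circ S$ together with $SW = \widetilde{W}$. The conclusion $SW_i(T) = W_i(\widetilde{T})$ yields $d_i(T) = d_i(\widetilde{T})$ for all $i$, hence $\ell(T) = \ell(\widetilde{T})$ and $\lambda_T = \lambda_{\widetilde{T}}$. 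The restriction of $S$ to $W_\ell(T)$ is then an isomorphism onto $W_\ell(\widetilde{T})$ that conjugates the operators $T|_{W_\ell(T)}$ and $\widetilde{T}|_{W_\ell(\widetilde{T})}$, so these operators share the same invariant factors, giving $\I_T = \I_{\widetilde{T}}$.

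For the converse, assume $\lambda_T = \lambda_{\widetilde{T}}$ and $\I_T = \I_{\widetilde{T}}$. Equality of the partitions forces $\ell(T) = \ell(\widetilde{T}) = \ell$ and $\dim W_i(T) = \dim W_i(\widetilde{T})$ for $0 \leq i \leq \ell$. Since the operators $T|_{W_\ell(T)}$ and $\widetilde{T}|_{W_\ell(\widetilde{T})}$ share the same invariant factors, classical linear algebra provides an isomorphism of their invariant subspaces conjugating them, which I would extend to a linear automorphism of $V$ using the dimension equality at level $\ell$; this shows $T|_{W_\ell(T)}$ and $\widetilde{T}|_{W_\ell(\widetilde{T})}$ are similar as members of $\mathcal{L}(V)$. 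I would then climb the chain by repeated application of Lemma~\ref{lem:restriction}: for $j = \ell, \ell-1, \ldots, 1$, the subspace associated with $T|_{W_{j-1}(T)}$ in the sense of the lemma is precisely $\{v \in W_{j-1}(T): Tv \in W_{j-1}(T)\} = W_j(T)$, and similarly on the tilde side, so the inductive similarity at level $j$ combined with $\dim W_{j-1}(T) = \dim W_{j-1}(\widetilde{T})$ allows me to invoke Lemma~\ref{lem:restriction} and conclude similarity at level $j-1$. Setting $j = 1$ delivers $T \sim \widetilde{T}$.

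The main obstacle I expect is bookkeeping rather than any substantial technical difficulty: one must carefully verify the ``restriction of a restriction'' identity $\{v \in W_{j-1}(T) : Tv \in W_{j-1}(T)\} = W_j(T)$ so that the data required by Lemma~\ref{lem:restriction} exactly matches the chain, and keep straight that every similarity throughout the inductive ascent is taken with respect to the common ambient space $V$. Given this, the argument reduces to a clean induction on the chain length.
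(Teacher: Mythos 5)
Your proposal is correct and follows essentially the same route as the paper: iterate Lemma~\ref{lem:restriction} along the chain $\{W_i\}$ to reduce similarity of $T$ and $\widetilde{T}$ to similarity of the operators on the maximal invariant subspaces together with equality of the dimensions $\dim W_i = \dim \widetilde{W}_i$. The only (cosmetic) difference is that the paper runs the biconditional of Lemma~\ref{lem:restriction} down the chain to get both directions at once, whereas you re-derive the forward direction by a direct induction showing $SW_i(T)=W_i(\widetilde{T})$; aside from a harmless off-by-one in where your ascent terminates (it should stop after concluding similarity at level $1$, i.e.\ of $T$ and $\widetilde{T}$ themselves), the argument is sound.
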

\begin{proof} For $T\in L(W,V)$, consider the sequence of subspaces $W_i$ such that $W_0=V$, $W_1=W$ and $W_{i+1}= \{ v \in W_i: Tv \in W_i \}.$    Let $\ell= \text{min}\{i: W_i=W_{i+1}\}$ and denote by $T_i$ the restriction of $T$ to $W_i$ for $1\leq i \leq \ell$. Similarly, define $\widetilde{W}_i$, $\widetilde{T}_i$, $\widetilde{\ell}$ for $\widetilde{T}  \in L(\widetilde{W} ,V)$. By Lemma \ref{lem:restriction}, it follows that $T_1$ is similar to $\widetilde{T}_1$ if and only if $T_{2}$ is similar to $\widetilde{T}_{2}$ and dim $W_1=$ dim $\widetilde{W}_1$. 
Using the lemma again, it is clear that $T_1$ is similar to $\widetilde{T}_1$ if and only if $T_{3}$ is similar to $\widetilde{T}_{3}$, dim $W_{2}=$ dim $\widetilde{W}_{2}$  and  dim $W_1=$ dim $\widetilde{W}_1$. By repeated application of the lemma, it is evident that $T $ is similar to $\widetilde{T} $ if and only if
$T_\ell$ is similar to $\widetilde{T}_{\widetilde{\ell}}$ with  $\ell= \widetilde{\ell}$ and dim $W_i=$ dim $\widetilde{W}_i$ for $1 \leq i \leq \ell$. The linear operators $T_\ell: W_{\ell} \rightarrow W_{\ell} $ and  $\widetilde{T}_{\widetilde{\ell}}: \widetilde{W}_{\widetilde{\ell}} \rightarrow \widetilde {W}_{\widetilde{\ell}} $ are similar if and only if $\I_{T}= \I_{\widetilde{T}}$. Thus, it follows that $ T $ and $\widetilde{T} $ are similar if and only if $\lambda_{T}= \lambda_{\widetilde{T}}$ and $\I_{T}= \I_{\widetilde{T}}$. 
\end{proof}
\begin{definition}
For any ordered set of invariant factors $\I$, define
$$\deg \I=\deg\prod_{p\in \I}p.$$
\end{definition}

\begin{remark}
  In view of the above proposition similarity classes in $\mathcal{L}(V)$ are indexed by pairs $(\lambda,\I)$ where $\lambda$ is an integer partition (possibly the empty partition) and $\I\subseteq \Fq[x]$ is an ordered set of invariant factors satisfying
$$
|\lambda|+\deg \I=\dim V.
$$
Denote the similarity class in $\mathcal{L}(V)$ corresponding to the pair $(\lambda,\I)$ by $\mathcal{C}(\lambda,\I)$. 
For a given subspace $W$ of $V$  and an integer partition $\lambda$ with largest part $\dim V - \dim W$, denote by  $\mathcal{C}_{W,V} (\lambda,\I) $ the set of all linear transformations in $L(W,V)$ corresponding to the pair  $(\lambda,\I)$, i.e.,
$$\mathcal{C}_{W,V} (\lambda,\I)  := L(W,V) \cap \mathcal{C}(\lambda,\I). $$
In the case $W=V$, the similarity class $\mathcal{C}_{V,V} (\lambda,\I) $ is defined only when $\lambda$ is the empty partition and it depends only on the invariant factors $\I$. In this case $\CC_{V,V}(\emptyset,\I)$ is abbreviated to $\CC(\I)$. A closed formula for the size of $\CC(\I)$  can be found in Stanley \cite[Eq. 1.107]{Stanley2012}.
\end{remark}

\section{Counting simple linear transformations} 
\begin{definition}          
\label{simpletransformation}
A linear transformation $T\in \mathcal{L}(V)$ is \emph{simple} if, for each $T$-invariant subspace $U$, either $U=\{0\}$ or $U=V$. 
\end{definition}
It follows from the definition that simple maps are injective. If $T\in \mathcal{L}(V)$ is simple with domain a proper subspace of $V$, then the maximal $T$-invariant subspace is necessarily the zero subspace and therefore $T\in \mathcal{C}(\lambda,\emptyset)$ for some integer partition $\lambda$ of $\dim V$ with largest part $\dim V-\dim W$ where $W$ is the domain of $T$. In this section we determine the size of $\mathcal{C}(\lambda,\emptyset)$ for an arbitrary partition $\lambda$ of $\dim V$. We begin with some combinatorial lemmas.
 

The number of $k$-dimensional subspaces of an $n$-dimensional vector space over $\Fq$ is given by the $q$-binomial coefficient \cite[p. 292]{MR1207813}    
$$
{n \brack k}_q:=\mathlarger\prod_{i=1}^k \frac{q^{n-i+1}-1}{q^i-1}. 
$$

\begin{lemma}
\label{lem:nointersectionsubspace }
Let $U\subseteq W$ be subspaces of an $n$-dimensional vector space $V$ over $\Fq$ with $\dim U=d$ and $\dim W=k$. The number of $k$-dimensional subspaces of $V$ whose intersection with $W$ is $U$ equals
  $$
 {n-k \brack k-d}_q q^{(k-d)^2}.
  $$
\end{lemma}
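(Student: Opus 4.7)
The plan is to parametrize each $k$-dimensional subspace $\widetilde{W}$ with $\widetilde{W}\cap W=U$ by the pair $(X,\widetilde{W})$, where $X:=\widetilde{W}+W$, and then count in two stages: first the number of possibilities for $X$, and then the number of $\widetilde{W}$'s giving rise to a fixed $X$.

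First I would observe that the condition $\widetilde{W}\cap W=U$ forces $U\subseteq\widetilde{W}$, and by the dimension formula
$$\dim(\widetilde{W}+W)=\dim\widetilde{W}+\dim W-\dim(\widetilde{W}\cap W)=2k-d.$$
Conversely, if $\widetilde{W}$ is a $k$-dimensional subspace containing $U$, then $\widetilde{W}\cap W\supseteq U$, and $\widetilde{W}\cap W=U$ iff $\dim(\widetilde{W}+W)=2k-d$. Thus $X:=\widetilde{W}+W$ is necessarily a $(2k-d)$-dimensional subspace of $V$ containing $W$. The number of such $X$ equals the number of $(k-d)$-dimensional subspaces of the quotient $V/W$, namely
$$
{n-k \brack k-d}_q.
$$

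Next, for a fixed $X$ of dimension $2k-d$ containing $W$, I would count the subspaces $\widetilde W$ with $U\subseteq\widetilde W\subseteq X$, $\dim\widetilde W=k$, and $\widetilde W+W=X$, $\widetilde W\cap W=U$. Passing to the quotient $\overline X:=X/U$, which has dimension $2(k-d)$, the image $\overline{W}:=W/U$ is a subspace of dimension $k-d$, and the image $\widetilde{W}/U$ is a subspace of dimension $k-d$ which is a complement of $\overline{W}$ in $\overline{X}$ (by the sum and intersection conditions). The correspondence $\widetilde{W}\leftrightarrow \widetilde{W}/U$ is a bijection between the desired subspaces and complements of $\overline W$ in $\overline X$.

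Finally, I would invoke the standard count that the number of complements of a fixed $a$-dimensional subspace inside an $(a+b)$-dimensional $\Fq$-space is $q^{ab}$; here $a=b=k-d$, giving $q^{(k-d)^2}$ complements. Multiplying yields the claimed total
$$
{n-k \brack k-d}_q\, q^{(k-d)^2}.
$$
No step looks hard; the only thing to be careful about is the equivalence between $\widetilde{W}\cap W=U$ and the simultaneous conditions $U\subseteq\widetilde{W}$, $\dim(\widetilde{W}+W)=2k-d$, which makes the two-stage parametrization a bijection.
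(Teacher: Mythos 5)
Your proof is correct, but it takes a genuinely different route from the paper's. The paper argues directly with ordered bases: fixing an ordered basis of $U$, it counts the $\prod_{i=k}^{2k-d-1}(q^n-q^i)$ ways to extend it to an ordered basis of a subspace $W'$ meeting $W$ exactly in $U$, and then divides by the $\prod_{i=d}^{k-1}(q^k-q^i)$ extensions that yield the same $W'$; the formula drops out of simplifying the quotient. You instead give a structural two-stage parametrization: first the choice of $X=\widetilde{W}+W$, a $(2k-d)$-dimensional overspace of $W$ (equivalently a $(k-d)$-dimensional subspace of $V/W$, giving the factor ${n-k \brack k-d}_q$), and then the choice of $\widetilde{W}/U$ as a complement of $W/U$ in $X/U$, giving the factor $q^{(k-d)^2}$ by the standard count of complements. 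Your version has the advantage of explaining \emph{why} the answer factors as it does --- each factor is the cardinality of one stage of the parametrization --- at the cost of invoking the complement count $q^{ab}$ as an external (though standard) fact; the paper's version is a self-contained one-line computation but leaves the final form to algebraic simplification. Your bookkeeping at the hinge of the argument (that for $U\subseteq\widetilde{W}$ with $\dim\widetilde{W}=k$ one has $\widetilde{W}\cap W=U$ iff $\dim(\widetilde{W}+W)=2k-d$, and that the degenerate case $2k-d>n$ is handled automatically by the vanishing of the $q$-binomial coefficient) is correct, so there is no gap.
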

\begin{proof}
  We count the number of $k$-dimensional subspaces $W'$ for which $W \cap W'=U$. Given any ordered basis of $U$, there are $\prod_{i=k}^{2k-d-1}(q^n-q^i)$ ways to extend it to an ordered basis of $W'$. Counting in this manner, the same subspace $W'$ arises in precisely $\prod_{i=d}^{i=k-1}(q^k-q^i)$ ways. Thus the total number of such subspaces $W'$ is given by  
  $$
  \frac{\prod_{i=k}^{2k-d-1}(q^n-q^i)}{\prod_{i=d}^{i=k-1}(q^k-q^i)}= {n-k \brack k-d}_q q^{(k-d)^2}. 
  $$
\end{proof}
\begin{definition}
A \emph{flag} \cite[p. 95]{MR1153249} of length $r$ in a vector space $V$ is an increasing sequence of subspaces $W_i(0\leq i\leq r)$ such that
$$   \{0\} = \WW_{0} \subset \WW_{1} \subset\cdots \WW_{r-1} \subset \WW_{r} =V. $$
\end{definition}
The following lemma \cite[Sec. 1.5]{MR2217227} gives the number of flags of length $r$ with subspaces of given dimensions.
 
\begin{lemma}
 \label{lem:noflags}
Let $n_1,\ldots,n_r$ be positive intgers with  $n_1+\cdots+n_r=n$. The number of flags $W_0\subset \cdots \subset W_r$ of length $r$ in an $n$-dimensional vector space $V$ over $\Fq$ with $\dim W_i= n_1 +n_2+ \cdots + n_i$ is given by the $q$-multinomial coefficient

$${n \brack n_1, n_2, \ldots, n_r}_q: = \frac{[n]_q!}{[n_1]_q![n_2]_q!\ldots[n_r]_q!},$$
where $[n]_q :=\frac{q^n-1}{q-1} $ and $[n]_q!:= [n]_q [n-1]_q \ldots [1]_q.$
 
\end{lemma}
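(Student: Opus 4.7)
The plan is to proceed by induction on the length $r$ of the flag. The base case $r=1$ forces $n_1=n$, and there is a unique flag $\{0\}\subset V$, matching the trivial value $[n]_q!/[n]_q!=1$ of the right-hand side.

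For the inductive step, I would stratify the set of flags by the choice of the penultimate subspace $W_{r-1}$, which has dimension $n_1+\cdots+n_{r-1}=n-n_r$. By the standard count of subspaces of a given dimension, there are
$$
{n \brack n-n_r}_q = {n \brack n_r}_q = \frac{[n]_q!}{[n_r]_q!\,[n-n_r]_q!}
$$
choices for $W_{r-1}$. Once $W_{r-1}$ is fixed, completing the data to a flag of the prescribed type amounts to choosing a length-$(r-1)$ flag $\{0\}=W_0\subset W_1\subset\cdots\subset W_{r-1}$ inside the $(n-n_r)$-dimensional space $W_{r-1}$ with dimension jumps $n_1,\ldots,n_{r-1}$. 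By the inductive hypothesis applied to $W_{r-1}$, the number of such flags is ${n-n_r \brack n_1,\ldots,n_{r-1}}_q$.

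Multiplying the two counts produces
$$
\frac{[n]_q!}{[n_r]_q!\,[n-n_r]_q!} \cdot \frac{[n-n_r]_q!}{[n_1]_q!\cdots[n_{r-1}]_q!} = \frac{[n]_q!}{[n_1]_q!\cdots[n_r]_q!},
$$
which is precisely the claimed $q$-multinomial coefficient. I do not anticipate any real obstacle: the only nontrivial identity used is the standard factorisation ${n \brack n_r}_q = [n]_q!/([n_r]_q!\,[n-n_r]_q!)$, which is immediate from the product formula for ${n \brack k}_q$ stated just before Lemma~\ref{lem:nointersectionsubspace }. If a more self-contained argument is preferred, one could instead count ordered bases of $V$ adapted to the flag type (there are $\prod_i(q^n-q^i) = |\GL_n(\F_q)|$ of them) and divide by the number of adapted bases yielding a fixed flag, recovering the same formula; but the telescoping induction above is the cleanest route.
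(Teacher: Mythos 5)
Your proof is correct. Note that the paper itself does not prove this lemma at all --- it is stated as a known fact with a citation to Morrison's survey --- so there is no internal argument to compare against. Your telescoping induction on $r$, peeling off the penultimate subspace $W_{r-1}$ and using the factorisation ${n \brack n_r}_q = [n]_q!/([n_r]_q!\,[n-n_r]_q!)$, is a standard and complete derivation; the base case and the bijection between flags of the given type and pairs (choice of $W_{r-1}$, flag of the shorter type inside $W_{r-1}$) are both handled correctly, and the alternative orbit-counting argument you sketch would work equally well.
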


In the statement of the following theorem and the rest of this paper, the number of nonsingular $k\times k$ matrices over $\Fq$  \cite[Sec. 1.2]{MR2217227} is denoted by $\gamma_q(k)=\prod_{i=0}^{k-1} (q^k-q^i)$.
\begin{theorem}
 Let $\lambda$ be a partition of $n$. Then
  $$
 \left|\CC(\lambda,\emptyset)\right|= q^{\sum_{j\geq 2}\lambda_j^2}{n \brack n-\lambda_1, \lambda_1-\lambda_2,\ldots,\lambda_{\ell}}_q \gamma_q(n-\lambda_1). 
$$
 
\end{theorem}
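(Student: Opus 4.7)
The plan is to parametrize each $T \in \CC(\lambda,\emptyset)$ by its associated defect chain of subspaces together with the map data compatible with that chain. Since $T$ is simple, its defect chain terminates at $\{0\}$, so the data is equivalent to a pair: a flag $V = W_0 \supset W_1 \supset \cdots \supset W_\ell = \{0\}$ with $\dim W_{i-1}/W_i = \lambda_i$, together with a linear map $T : W_1 \to V$ whose defect chain equals this flag. By Lemma \ref{lem:noflags}, the number of such flags is the $q$-multinomial coefficient ${n \brack \lambda_1, \lambda_2, \ldots, \lambda_\ell}_q$.

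Fixing such a flag, I would then count the compatible maps. A short induction on $i$ tracking the recursion $W_{i+1}=\{v\in W_i:Tv\in W_i\}$ shows that the defect chain of $T$ coincides with the prescribed flag if and only if $T(W_i) \subseteq W_{i-1}$ for every $i\geq 1$ and each induced quotient map $\bar T_i : W_i/W_{i+1} \to W_{i-1}/W_i$ is injective. To count the maps satisfying these, I would choose an ordered basis $\{v_{i,j}\}$ of $W_1$ so that, for each $1 \leq i \leq \ell-1$, the vectors $v_{i,1},\ldots,v_{i,\lambda_{i+1}}$ span a complement of $W_{i+1}$ in $W_i$. Specifying $T$ then reduces to specifying each $Tv_{i,j}$: processing them in the order $(1,1),(1,2),\ldots,(1,\lambda_2),(2,1),\ldots$, the image $Tv_{i,j}$ must lie in $W_{i-1}$ and its residue in $W_{i-1}/W_i$ must extend the span of the residues of $Tv_{i,1},\ldots,Tv_{i,j-1}$. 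This yields $q^{\dim W_i}(q^{\lambda_i} - q^{j-1})$ choices for each $v_{i,j}$, and therefore
\[
\prod_{i=1}^{\ell-1}q^{\lambda_{i+1}\dim W_i}\prod_{j=0}^{\lambda_{i+1}-1}(q^{\lambda_i}-q^j)
\]
compatible maps per flag.

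The main obstacle is the ensuing algebraic simplification. Factoring $q^{\lambda_i}-q^j=q^j(q^{\lambda_i-j}-1)$ pulls out a power $q^{\sum_{i\geq 2}\binom{\lambda_i}{2}}$ and leaves a product that simplifies via $\prod_{j=0}^{\lambda_{i+1}-1}(q^{\lambda_i-j}-1) = (q-1)^{\lambda_{i+1}}[\lambda_i]_q!/[\lambda_i-\lambda_{i+1}]_q!$. Substituting $\dim W_i = \sum_{k>i}\lambda_k$ and $n-\lambda_1 = \sum_{i\geq 2}\lambda_i$, a direct expansion verifies the exponent identity
\[
\sum_{i=1}^{\ell-1}\lambda_{i+1}\dim W_i+\sum_{i=2}^{\ell}\binom{\lambda_i}{2} = \sum_{j\geq 2}\lambda_j^2+\binom{n-\lambda_1}{2}.
\]
Combining the simplified product with ${n \brack \lambda_1,\ldots,\lambda_\ell}_q = [n]_q!/\prod_{i=1}^{\ell}[\lambda_i]_q!$ causes the $[\lambda_i]_q!$ for $1 \leq i \leq \ell-1$ to telescope, leaving an overall factor of $[n]_q!/([\lambda_\ell]_q!\prod_{i=1}^{\ell-1}[\lambda_i-\lambda_{i+1}]_q!)$. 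Finally, the identity $\gamma_q(m)=q^{\binom{m}{2}}(q-1)^m[m]_q!$ at $m=n-\lambda_1$ lets me absorb the accumulated $q^{\binom{n-\lambda_1}{2}}(q-1)^{n-\lambda_1}$ into $\gamma_q(n-\lambda_1)/[n-\lambda_1]_q!$, after which the surviving $q$-factorials recombine into precisely the $q$-multinomial coefficient ${n \brack n-\lambda_1, \lambda_1-\lambda_2, \ldots, \lambda_{\ell-1}-\lambda_\ell, \lambda_\ell}_q$ of the theorem.
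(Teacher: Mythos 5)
Your proof is correct, and it reaches the formula by a genuinely different decomposition than the paper's. The paper first fixes the domain $W$ and computes $\left|\CC_{W,V}(\lambda,\emptyset)\right|$ by parametrizing $T$ with \emph{two} flags: the defect chain $\{\WW_i\}$ inside $W$ and its image chain $\{\WW_i'=T\WW_i\}$, the latter enumerated one step at a time using the lemma that counts subspaces meeting a fixed subspace in a prescribed subspace (this is where the factors ${\lambda_i\brack\lambda_{i+1}}_q\,q^{\lambda_{i+1}^2}$ arise geometrically); it then counts the maps carrying one flag onto the other, and finally sums over the ${n\brack n-\lambda_1}_q$ choices of $W$. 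You instead parametrize by the single complete flag $V=W_0\supset\cdots\supset W_\ell=\{0\}$ (absorbing the choice of domain into the flag count via Lemma \ref{lem:noflags}) and count the compatible maps directly, using the characterization that the defect chain equals the prescribed flag if and only if $T(W_i)\subseteq W_{i-1}$ for all $i$ and each induced map $W_i/W_{i+1}\to W_{i-1}/W_i$ is injective; this criterion is correct, and its forward half is precisely the paper's argument that $\lambda_j\geq\lambda_{j+1}$. Your per-vector count $q^{\dim W_i}(q^{\lambda_i}-q^{j-1})$ is right, the exponent identity checks out, and the $q$-factorial bookkeeping does close up to the stated multinomial. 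What your route buys is economy: it dispenses with the image flag and the subspace-intersection lemma altogether, and merges the sum over domains into the flag count. What the paper's route buys is that the quantity $q^{\sum_{i\geq 2}\lambda_i^2}\prod_i{\lambda_i\brack\lambda_{i+1}}_q$ appears as an honest count of image flags and is isolated as $\sigma(\lambda)$ for a fixed domain, which is exactly the form reused in the Durfee-square argument of Proposition \ref{prop:durfees} and in the later sections; in your packaging that factor only emerges after the final algebraic simplification.
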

\begin{proof}
  We count the number of simple linear transformations $T\in \mathcal{L}(V)$ having defect dimensions $(\lambda_1,\lambda_2, \ldots, \lambda_{\ell})$ defined on some subspace of $V$ of dimension $n-\lambda_1$. Fix a subspace $W$ of $V$ of dimension $n-\lambda_1$. We first determine the cardinality of $\CC_{W,V} (\lambda,\emptyset)$. For $T \in \CC_{W,V}(\lambda,\emptyset)$ consider the chain of subspaces $\{W_i=W_i(T) \}_{i=0}^{\ell}$ associated with $T$. Define a sequence $\{ \WW_i' \}_{i=1}^{\ell}$ by $ \WW_i'=T(\WW_i)$. Since $T$ is injective, we have $\dim \WW_i= \dim \WW_i' =d_i$ for $i\geq 1$. By the choice of $T$ we have $d_{i-1}-d_i=\lambda_i$. Note that $\WW_{i} \cap {\WW}_{i}'= \WW_{i+1}'$ for $1 \leq i \leq \ell-1$. 
  The sequence $\{\WW_i\}_{i=0}^\ell$ is a flag 
in $W$ of length $\ell-1$:
$$   \{0\} = \WW_{\ell} \subset \cdots \WW_2 \subset \WW_1 =W $$
 such that dim $\WW_i =d_i= \lambda_{\ell} + \lambda_{\ell-1} + \cdots \lambda_{i+1}   $. By Lemma \ref{lem:noflags}, the number of such flags is $${n-\lambda_1 \brack \lambda_{\ell}, \lambda_{\ell-1}, \ldots , \lambda_2 }_q .$$ 
For a given choice of $\{\WW_i\}_{i=0}^{\ell}$, the total number of choices for the sequence $\{\WW_i'\}_{i=0}^{\ell}$ equals the total number of flags
 $$   \{0\} = \WW_{\ell}' \subset \cdots \WW_2' \subset \WW_1' = TW $$
  of length $\ell-1$ where dim $\WW_i' =d_i$ and $\WW_{i} \cap \WW_{i}'= \WW_{i+1}'$ for $1 \leq i \leq \ell-1$. 
Thus $W_{\ell-1}'$ is a subspace of $W_{\ell-2}$ of dimension $d_{\ell-1}$ that intersects $W_{\ell-1}$ trivially. It follows by Lemma \ref{lem:nointersectionsubspace } that $\WW_{\ell-1}'$ can be chosen in   
$$ { d_{\ell-2} -d_{\ell-1} \brack d_{\ell-1} - d_{\ell}}_q q^{(d_{\ell-1} - d_{\ell})^2} ={ \lambda_{\ell-1} \brack \lambda_{\ell}}_q q^{\lambda_{\ell}^2}$$
ways. Similarly, the conditions $\WW_{\ell-2}' \subseteq \WW_{\ell-3}$ and 
$\WW_{\ell-2} \cap \WW_{\ell-2}'= \WW_{\ell-1}'$ imply that $\WW_{\ell-2}'$ can be chosen in
$${ d_{\ell-3} -d_{\ell-2} \brack d_{\ell-2} - d_{\ell-1}}_q q^{(d_{\ell-2} - d_{\ell-1})^2} = { \lambda_{\ell-2} \brack \lambda_{\ell-1}}_q q^{\lambda_{\ell-1}^2}$$
ways. Proceeding in this manner, it is seen that the total number of choices for the sequence $\{\WW_i'\}_{i=1}^\ell$ is equal to $${ \lambda_{\ell-1} \brack \lambda_{\ell}}_q q^{\lambda_{\ell}^2} \, { \lambda_{\ell-2} \brack \lambda_{\ell-1}}_q q^{\lambda_{\ell-1}^2}\,  \cdots \, { \lambda_{1} \brack \lambda_{2}}_q q^{\lambda_{2}^2}= q^{\sum_{i=2}^{\ell} \lambda_i^2} { \lambda_{1} \brack \lambda_{1}-\lambda_{2}, \lambda_{2}-\lambda_{3}, \ldots , \lambda_{\ell}  }_q.$$
For each choice of the flags $\{\WW_i\}_{i=0}^\ell$ and $\{\WW_i'\}_{i=0}^\ell$, we count the number of possibilities for $T$. Note  that $T$ is injective and $T \WW_i= \WW_i'$ for $1 \leq i \leq \ell$. Thus the number of  ways to map $\WW_{\ell-1}$ onto $\WW_{\ell-1}'$ is equal to the number of invertible $\lambda_{\ell} \times \lambda_{\ell}$ matrices over $\Fq$, i.e., $\gamma_q({\lambda_{\ell}})$. The number of ways to extend $T$ to $\WW_{\ell-2}$ such that $T \WW_{\ell-2} = \WW_{\ell-2}'$ is evidently

$$\prod_{i=d_{\ell-1}}^{d_{\ell-1}+\lambda_{\ell-1}-1} (q^{d_{\ell-2}}-q^i) = q^{d_{\ell-1}\lambda_{\ell-1}}\gamma_q({\lambda_{\ell-1}}).$$

Following this line of reasoning, the total number of choices for the map $T$ for a given choice of $\{\WW_i\}_{i=0}^\ell$ and $\{\WW_i'\}_{i=0}^\ell$ equals
$$q^{\sum_{i=2}^{\ell} {d_i\lambda_i}}\prod_{i=2}^{\ell}\gamma_q({\lambda_i}).$$
It follows that
\begin{align*}
\left|\mathcal{C}_{W,V} (\lambda,\emptyset)\right| & =  {n-\lambda_1 \brack \lambda_\ell, \lambda_{\ell-1}, \ldots , \lambda_2 }_q  q^{\sum_{i=2}^{\ell}\lambda_i^2} { \lambda_{1} \brack \lambda_{1}-\lambda_{2}, \lambda_{2}-\lambda_{3}, \ldots , \lambda_{\ell}  }_q  q^{\sum_{i=2}^{\ell}  {d_i\lambda_i}}  \\
& \qquad \times \prod_{i=2}^{\ell}\gamma_q({\lambda_i}).
\end{align*}
We expand the values of $\gamma_q({\lambda_i})$  and simplify the above expression.
\begin{align}
\left|\mathcal{C}_{W,V} (\lambda,\emptyset)\right| \nonumber
& =  q^{\sum_{i=2}^{\ell}\lambda_i^2} {\lambda_{1} \brack \lambda_{1}-\lambda_{2}, \lambda_{2}-\lambda_{3}, \ldots , \lambda_{\ell}  }_q \frac{[n-\lambda_1]_q!}{[\lambda_{\ell}]_q! [\lambda_{\ell-1}]_q! \cdots [\lambda_2]_q! } q^{\sum_{i=2}^{\ell}  {d_i\lambda_i}}  \\ \nonumber
& \qquad \times \prod_{i=2}^{\ell} (q-1)^{\lambda_i} q^{\binom{\lambda_i}{2}} {[\lambda_i]_q!}\\ \nonumber
& =  q^{\sum_{i=2}^{\ell}\lambda_i^2} { \lambda_{1} \brack \lambda_{1}-\lambda_{2}, \lambda_{2}-\lambda_{3}, \ldots , \lambda_{\ell}  }_q [n-\lambda_1]_q!q^{\sum_{i=2}^{\ell}  {d_i\lambda_i}} (q-1)^{\lambda_2+\cdots +\lambda_{\ell}}  \\ \nonumber
& \qquad \times  q^{\binom{\lambda_2}{2} + \cdots + \binom{\lambda_{\ell}}{2}} \\ \nonumber
& =  q^{\sum_{i=2}^{\ell}\lambda_i^2} { \lambda_{1} \brack \lambda_{1}-\lambda_{2}, \lambda_{2}-\lambda_{3}, \ldots , \lambda_{\ell}  }_q [n-\lambda_1]_q! (q-1)^{n-\lambda_1} q^{\binom{n-\lambda_1}{2}} \label{eq:CWV}\\ 
 & =  q^{\sum_{i=2}^{\ell}\lambda_i^2} { \lambda_{1} \brack \lambda_{1}-\lambda_{2}, \lambda_{2}-\lambda_{3}, \ldots , \lambda_{\ell}  }_q \gamma_q(n-\lambda_1). 
\end{align}
Since the domain of $T$ is an arbitrary $n-\lambda_1$ dimensional subspace of $V$, we sum over all $(n-\lambda_1)$ dimensional subspaces of $V$ to obtain
\begin{align*}
 \left| \CC(\lambda,\emptyset )\right| &=\sum_{W:\dim {W} =n-\lambda_1} \left| \CC_{W,V}(\lambda,\emptyset )\right|
                         = {n \brack n-\lambda_1}_q  \left|\CC_{W,V}(\lambda,\emptyset )\right|.
 \end{align*} 
 Substituting the expression for $\left|\CC_{W,V}(\lambda,\emptyset )\right|$ obtained earlier, we obtain
 \begin{align*}                        
 \left| \CC(\lambda,\emptyset ) \right|                 &= {n \brack n-\lambda_1}_q q^{\sum_{i=2}^{\ell}\lambda_i^2} { \lambda_{1} \brack \lambda_{1}-\lambda_{2}, \lambda_{2}-\lambda_{3}, \ldots , \lambda_{\ell}  }_q \gamma_q(n-\lambda_1) \\
                         &= q^{\sum_{i=2}^{\ell}\lambda_i^2}{n \brack n-\lambda_1, \lambda_1-\lambda_2,\ldots,\lambda_{\ell}}_q \gamma_q(n-\lambda_1). \qedhere 
\end{align*}                         

\end{proof}

\begin{corollary}
  \label{cor:cwv}
Let $W$ be a proper subspace of an $n$-dimensional vector space $V$ over $\Fq$. Let $\lambda \vdash n$ with $\lambda_1=\dim V-\dim W$. Then the number of simple linear transformations defined on $W$ with defect dimensions $\lambda$ is given by
  $$\sigma(\lambda):=\left| \CC_{W,V} (\lambda,\emptyset)\right| = q^{\sum_{i\geq 2}\lambda_i^2} \gamma_q(n-\lambda_1)  \mathlarger\prod_{i\geq 1}{\lambda_i \brack \lambda_{i+1}}_q.
$$

\end{corollary}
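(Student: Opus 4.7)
The plan is to extract the per-$W$ count directly from the preceding theorem's proof and then repackage the $q$-multinomial coefficient that appears there as a product of $q$-binomials. Inspecting the argument in the theorem, the entire chain-of-flags construction, the counting of completions $\{W_i'\}$, and the enumeration of linear maps $T$ compatible with those flags is carried out relative to an arbitrary fixed subspace $W$ of dimension $n-\lambda_1$; only at the very end is the count multiplied by ${n \brack n-\lambda_1}_q$ to sum over $W$. Reading off the intermediate formula (essentially equation \eqref{eq:CWV}) gives
\[
\left|\mathcal{C}_{W,V}(\lambda,\emptyset)\right| \;=\; q^{\sum_{i\geq 2}\lambda_i^2}\,{\lambda_1 \brack \lambda_1-\lambda_2,\,\lambda_2-\lambda_3,\ldots,\lambda_{\ell}}_q\,\gamma_q(n-\lambda_1)
\]
for any $W$ with $\dim W = n-\lambda_1$, and the value is manifestly independent of $W$.

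The remaining combinatorial content is the identity
\[
{\lambda_1 \brack \lambda_1-\lambda_2,\,\lambda_2-\lambda_3,\ldots,\lambda_\ell}_q \;=\; \prod_{i=1}^{\ell}{\lambda_i \brack \lambda_{i+1}}_q,
\]
where $\lambda_{\ell+1}:=0$. To verify it I would expand each factor on the right as ${\lambda_i \brack \lambda_{i+1}}_q = [\lambda_i]_q!/([\lambda_{i+1}]_q!\,[\lambda_i-\lambda_{i+1}]_q!)$; the $q$-factorials $[\lambda_2]_q!,\ldots,[\lambda_\ell]_q!$ appearing in numerators telescope against the matching denominator factors, leaving $[\lambda_1]_q!$ on top and $[\lambda_\ell]_q!\prod_{i=1}^{\ell-1}[\lambda_i-\lambda_{i+1}]_q!$ on the bottom, which is precisely the $q$-multinomial on the left. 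Finally, for $i > \ell$ both $\lambda_i$ and $\lambda_{i+1}$ vanish so ${\lambda_i \brack \lambda_{i+1}}_q = 1$, and the product $\prod_{i\geq 1}$ displayed in the corollary truncates to $i \leq \ell$ without change.

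There is no real obstacle: the theorem's counting argument has already done the substantive work, and all that is being claimed by the corollary is a clean rewriting. The only points worth checking are bookkeeping: the convention $\lambda_{\ell+1}=0$ so that the telescoping terminates correctly, and the degenerate case $\ell = 1$ (partition with a single part), in which both sides collapse to $1$ and the formula reduces to $\gamma_q(n-\lambda_1)$, matching the count of simple injections of $W$ into a complementary space.
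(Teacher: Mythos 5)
Your proposal is correct and follows the paper's own route: the paper's proof of this corollary is literally a one-line citation of Equation~\eqref{eq:CWV} from the preceding theorem, leaving the conversion of the $q$-multinomial into the telescoping product $\prod_{i\geq 1}{\lambda_i \brack \lambda_{i+1}}_q$ implicit. You have simply made that rewriting step explicit, which is a faithful (and slightly more complete) rendering of the same argument.
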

\begin{proof} 
Follows from Equation \eqref{eq:CWV} in the proof of the above theorem. 
\end{proof}
The above corollary may be used to deduce the number of simple linear transformations with a fixed domain by summing $\sigma(\lambda)$ over partitions with a fixed first part. We first collate some basic results on partitions. A useful graphic representation of an integer partition is the corresponding Young diagram. Given a partition $\lambda=(\lambda_1,\lambda_2,\ldots)$, put $\lambda_i$ (unit) cells in row $i$ to obtain its Young diagram. For instance, the Young diagram of the partition $(6,3,2)$ is shown in Figure \ref{fig:ferrers}. 
\begin{figure}
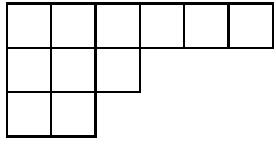

  \centering
  \ydiagram{6,3,2}
 \caption{The Young diagram of $(6,3,2)$.} 
  \label{fig:ferrers}

\end{figure}
\begin{definition}
  For integers $m,r,s$ denote by $p(m,r,s)$ the number of partitions of $m$ with at most $r$ parts in which each part is at most $s$.
\end{definition}
The geometric interpretation of $p(m,r,s)$ is that it counts the number of partitions of $m$ whose Young diagrams fit in a rectangle of size $r\times s$. The following lemma \cite[Prop. 1.1]{MR2339282} shows that the generating function for $p(m,r,s)$ for fixed values of $r$ and $s$ is a $q$-binomial coefficient.  
\begin{lemma}
  \label{lem:box}
  We have
  $$
{r+s \brack s}_q= \sum_{i\geq 0}p(i,r,s)q^i.
  $$
\end{lemma}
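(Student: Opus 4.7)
The plan is to prove the identity by induction on $r+s$ using the $q$-analog of Pascal's identity together with a natural combinatorial splitting of partitions whose Young diagrams fit in an $r\times s$ rectangle.

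Set $f_{r,s}(q):=\sum_{i\geq 0}p(i,r,s)q^i$. First I would handle the base cases $r=0$ and $s=0$. In either case only the empty partition fits in the box, so $f_{r,0}(q)=f_{0,s}(q)=1$, which agrees with ${r \brack 0}_q={s \brack s}_q=1$.

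For the inductive step with $r,s\geq 1$, I would partition the set of partitions $\lambda$ fitting in an $r\times s$ rectangle according to whether $\lambda_1<s$ or $\lambda_1=s$. If $\lambda_1<s$, then $\lambda$ already fits in an $r\times(s-1)$ rectangle, and the generating function of this class is $f_{r,s-1}(q)$. If $\lambda_1=s$, removing the first row gives a partition $\mu$ with at most $r-1$ parts, each of size at most $s$, satisfying $|\lambda|=|\mu|+s$; this class contributes $q^s f_{r-1,s}(q)$. Hence
$$
f_{r,s}(q)=f_{r,s-1}(q)+q^s f_{r-1,s}(q).
$$
On the algebraic side, the standard $q$-Pascal recurrence gives
$$
{r+s \brack s}_q={r+s-1 \brack s-1}_q+q^s{r+s-1 \brack s}_q,
$$
and the two summands on the right are, by the inductive hypothesis applied to the pairs $(r,s-1)$ and $(r-1,s)$ (each with smaller total), equal to $f_{r,s-1}(q)$ and $f_{r-1,s}(q)$ respectively. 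Since both sides of the claimed identity satisfy the same recurrence with the same base values, the induction closes.

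The only nontrivial point is lining up the combinatorial split with the $q$-Pascal recurrence in the right order, i.e.\ making sure that the term $q^s$ is associated with the case where a full row of length $s$ is stripped off. Once that matching is fixed the rest is bookkeeping, so I do not anticipate a real obstacle; the $q$-Pascal identity itself may be taken as known (it follows directly from the product formula for ${n \brack k}_q$).
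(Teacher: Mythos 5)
Your proof is correct. Note, however, that the paper does not actually prove Lemma~\ref{lem:box}: it is quoted from the literature (Aigner, \emph{A Course in Enumeration}, Prop.~1.1), so there is no internal argument to compare against. Your induction is one of the two standard proofs of this fact: the case split on whether $\lambda_1<s$ or $\lambda_1=s$ is exhaustive and disjoint, stripping the full first row correctly produces the weight $q^s$ attached to the $(r-1)\times s$ class, and the recurrence $f_{r,s}=f_{r,s-1}+q^s f_{r-1,s}$ matches the $q$-Pascal identity ${r+s\brack s}_q={r+s-1\brack s-1}_q+q^s{r+s-1\brack s}_q$ term by term, with the base cases $r=0$ and $s=0$ both giving $1$. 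The only stylistic alternative worth mentioning is the bijective route via lattice paths (encoding the boundary of the Young diagram as a monotone path in the $r\times s$ grid and reading off the inversion statistic), which avoids induction but requires setting up the path correspondence; your inductive argument is shorter and entirely adequate for the purpose the lemma serves in the paper, namely feeding into Proposition~\ref{prop:durfees}.
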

The \emph{rank} of a partition $\lambda$ is the largest integer $i$ for which $\lambda_i\geq i$. Geometrically the rank of a partition corresponds to side length of the largest square, called the \emph{Durfee square}, contained in the Young diagram of $\lambda$. The Durfee square of the partition $\lambda=(6,4,3,2)$ is indicated by the shaded cells in Figure~\ref{fig:durfee}.  
\begin{figure}
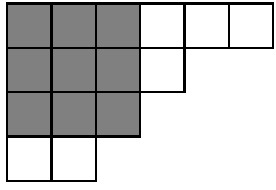

  \centering
  \ytableausetup{nosmalltableaux}
  \begin{ytableau}
    *(gray) & *(gray)  &*(gray) &  & & \\
    *(gray) & *(gray)  &*(gray) &   \\
    *(gray) & *(gray)  &*(gray)  \\
     &  
  \end{ytableau}

  \caption{The Durfee square of the partition $(6,4,3,2)$.}
  \label{fig:durfee}
\end{figure}
\begin{proposition}
  \label{prop:durfees}
  For positive integers $m\leq n$, we have
  $$
 \mathlarger\sum_{\substack{\lambda \vdash n \\ \lambda_1=m}} q^{\sum \lambda_i^2}\mathlarger\prod_{i\geq 1}{\lambda_i \brack \lambda_{i+1}}_q = q^{m^2+n-m} {n-1 \brack m-1}_q. 
  $$
\end{proposition}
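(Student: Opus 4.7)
The plan is to prove the identity by induction on $n$, treating all integers $m \in \{1, \ldots, n\}$ simultaneously. The base case $n = m$ is immediate: the only partition with $\lambda_1 = m$ and $|\lambda| = m$ is $\lambda = (m)$, for which $\sum \lambda_i^2 = m^2$ and $\prod_{i \ge 1}{\lambda_i \brack \lambda_{i+1}}_q = 1$, while the right-hand side evaluates to $q^{m^2}\,{m-1 \brack m-1}_q = q^{m^2}$.

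For the inductive step with $n > m$, each contributing $\lambda$ can be written uniquely as $\lambda = (m, \mu_1, \mu_2, \ldots)$, where $\mu$ is a partition of $n - m$ whose first part $j := \mu_1$ satisfies $1 \le j \le \min(m, n-m)$. Since $\sum_i \lambda_i^2 = m^2 + \sum_i \mu_i^2$ and $\prod_{i \ge 1}{\lambda_i \brack \lambda_{i+1}}_q = {m \brack j}_q \prod_{i \ge 1}{\mu_i \brack \mu_{i+1}}_q$, splitting the outer sum according to the value of $j$ gives
\begin{equation*}
\sum_{\substack{\lambda \vdash n \\ \lambda_1 = m}} q^{\sum \lambda_i^2} \prod_{i \ge 1}{\lambda_i \brack \lambda_{i+1}}_q
= q^{m^2} \sum_{j=1}^{\min(m, n-m)} {m \brack j}_q \sum_{\substack{\mu \vdash n-m \\ \mu_1 = j}} q^{\sum \mu_i^2} \prod_{i \ge 1}{\mu_i \brack \mu_{i+1}}_q.
\end{equation*}
Applying the induction hypothesis to each inner sum, then cancelling the common factor $q^{m^2 + n - m}$ and substituting $k = j - 1$, the identity to prove reduces to
\begin{equation*}
{n-1 \brack m-1}_q = \sum_{k \ge 0} q^{k^2 + k} {m \brack k+1}_q {n - 1 - m \brack k}_q.
\end{equation*}
This is precisely the $q$-Chu--Vandermonde identity ${a + b \brack c}_q = \sum_{k \ge 0} q^{k(a - c + k)}{a \brack c - k}_q {b \brack k}_q$ with $(a, b, c) = (m, \, n - 1 - m, \, m - 1)$, after the symmetry ${m \brack m - 1 - k}_q = {m \brack k+1}_q$.

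The main step is recognising that, after one unfolding of the recursion, the identity fits the $q$-Vandermonde template; once this is in hand, the remaining manipulations of $q$-binomial coefficients are routine. Edge cases (for instance, $j > n - m$, or when the upper summation limit $\min(m, n-m)$ differs from $m$) cause no trouble since the corresponding $q$-binomial coefficients ${n-m-1 \brack j-1}_q$ vanish automatically, so the induction hypothesis may be applied uniformly over $j \ge 1$.
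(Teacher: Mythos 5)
Your proof is correct, but it takes a genuinely different route from the paper. You peel off the largest part $\lambda_1=m$, apply induction on $n$ to the remaining partition $\mu\vdash n-m$, and reduce the identity to the $q$-Chu--Vandermonde summation ${n-1 \brack m-1}_q=\sum_{k\geq 0}q^{k^2+k}{m \brack k+1}_q{n-1-m \brack k}_q$; I checked the exponent bookkeeping ($j^2-j=k^2+k$ under $j=k+1$) and the vanishing of ${n-m-1\brack j-1}_q$ for $j>n-m$, and everything goes through. The paper instead argues bijectively: it introduces the set $S$ of partitions with exactly $m$ parts, largest part $n$ and rank $m$, computes $\sum_{\mu\in S}q^{|\mu|}$ once directly (yielding the right-hand side via the box-counting Lemma~\ref{lem:box}) and once by dissecting each $\mu\in S$ into its successive Durfee squares, which produces exactly the weight $q^{\sum\lambda_i^2}\prod_i{\lambda_i\brack\lambda_{i+1}}_q$ attached to $\lambda=\varphi(\mu)$. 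The paper's argument buys a combinatorial explanation of where the squares $\lambda_i^2$ and the chain of $q$-binomials come from; your argument is shorter and self-contained modulo a standard identity, and in effect the single unfolding of your recursion is the algebraic shadow of stripping off the first Durfee square in the paper's bijection. Either proof is acceptable; if you want to avoid citing $q$-Vandermonde as a black box, note that it too follows from Lemma~\ref{lem:box} by splitting a partition in an $a\times b$ box along a vertical line.
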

\begin{proof}
  Let $S$ denote the set of all partitions $\mu$ of rank $m$ and largest part $n$ with precisely $m$ parts. Visually $S$ consists of partitions whose Young diagrams fit inside an $m\times n$ rectangle $R$ and have at least $m$ cells in each row with precisely $n$ cells in the first row. We compute the sum
  $$
\sum_{\mu \in S}q^{|\mu|}
$$
in two different ways. Note that each $\mu \in S$ is uniquely determined by the partition $\mu'=(\mu_2-m,\mu_3-m,\ldots)$ since the first row and first $m$ columns of the Young diagram of $\mu$ are fixed. As the diagram of $\mu'$ fits in the $m-1\times n-m$ rectangle at the bottom right corner of $R$, it follows by Lemma~\ref{lem:box} that
\begin{align*}
  \sum_{\mu \in S}q^{|\mu|}&=q^{m^2+n-m}\sum_{\mu\in S}q^{|\mu'|}\\
  &=q^{m^2+n-m} {n-1 \brack m-1}_q,
\end{align*}
which accounts for the expression on the right hand side of the proposition. Now for any $\mu\in S $ consider the partition $\varphi(\mu)=\lambda \vdash n$ defined as follows: $\lambda_1$ is the rank of $\mu$, $\lambda_2$ is the rank of the partition whose diagram is to the right of the Durfee square of $\mu$ etc. For example, when $\mu=(8,7,6,5)$, we have $\varphi(\mu)=(4,2,1,1)$ as shown in Figure \ref{fig:durfees}. As $\mu$ varies over $S$, the partition $\varphi(\mu)$ varies over all partitions of $n$ with largest part $m$. Therefore
$$
\sum_{\mu \in S}q^{|\mu|}=\sum_{\substack{\lambda \vdash n\\\lambda_1=m}} \sum_{\substack{\mu\in S\\ \varphi(\mu)=\lambda}} q^{|\mu|}.
$$

\begin{figure}
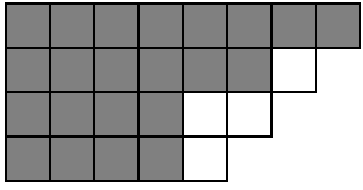

  
  \centering
  \ytableausetup{nosmalltableaux}
  \begin{ytableau}
    *(gray) & *(gray)  &*(gray) &*(gray)  &   *(gray) & *(gray)  &*(gray) &*(gray) \\
    *(gray) & *(gray)  &*(gray) &*(gray) &*(gray) &*(gray) &  \\
    *(gray) & *(gray)  &*(gray) &*(gray) &  & \\
    *(gray) & *(gray)  &*(gray) &*(gray) &  
  \end{ytableau}

  \caption{The partition $\varphi(\mu)=(4,2,1,1)$ corresponding to $\mu=(8,7,6,5)$.}
  \label{fig:durfees}
\end{figure}
Consider the inner sum on the right hand side. If $\varphi(\mu)=\lambda$, then $\lambda$ defines a sequence of squares (corresponding to the shaded cells in Figure \ref{fig:durfees}) which accounts for $\sum_i\lambda_i^2$ cells in the diagram of $\mu$. The cells of $\mu$ that do not lie in any square in the sequence (the unshaded cells in the running example of Figure~\ref{fig:durfees}) correspond to a sequence of partitions: the first is a partition that fits in a rectangle of size $(\lambda_1-\lambda_2)\times \lambda_2$, the second is a partition that fits in a rectangle of size $(\lambda_2-\lambda_3)\times \lambda_3$ etc. Putting these observations together and applying Lemma~\ref{lem:box}, it is clear that
\begin{align*}
  \sum_{\substack{\mu\in S\\ \varphi(\mu)=\lambda}} q^{|\mu|}= q^{\sum \lambda_i^2}{\lambda_1 \brack \lambda_2}_q {\lambda_2 \brack \lambda_3}_q \cdots 
\end{align*}
and the proposition follows. 
\end{proof}
We now deduce the theorem of Lieb, Jordan and Helmke \cite[Thm. 1]{Helmkeetal2015} alluded to in the introduction.

\begin{corollary}
  \label{cor:numsimp}
  Let $W$ be a proper $k$-dimensional subspace of a vector space $V$ of dimension $n$ over $\Fq$. The number of simple linear transformations with domain $W$ equals $
  \prod_{i=1}^k (q^n-q^i).
$
\end{corollary}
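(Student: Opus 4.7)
The plan is to identify the simple linear transformations with domain $W$ as exactly those lying in some class $\CC_{W,V}(\lambda,\emptyset)$ where $\lambda \vdash n$ has largest part $\lambda_1 = n-k$ (recall that a simple map on a proper subspace has zero as its maximal invariant subspace, so the ordered set of invariant factors is empty and $|\lambda| = \dim V = n$). Since these classes are disjoint as $\lambda$ varies, the total count is
\begin{equation*}
\sum_{\substack{\lambda \vdash n \\ \lambda_1 = n-k}} \sigma(\lambda),
\end{equation*}
and by Corollary~\ref{cor:cwv} this equals
\begin{equation*}
\gamma_q(k) \sum_{\substack{\lambda \vdash n \\ \lambda_1 = n-k}} q^{\sum_{i \geq 2}\lambda_i^2}\prod_{i \geq 1}{\lambda_i \brack \lambda_{i+1}}_q,
\end{equation*}
where I have used $\gamma_q(n-\lambda_1) = \gamma_q(k)$, which is independent of $\lambda$ and may be pulled out of the sum.

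Next I would pull a factor of $q^{\lambda_1^2} = q^{(n-k)^2}$ inside so the exponent becomes $\sum_{i \geq 1}\lambda_i^2$, matching the hypothesis of Proposition~\ref{prop:durfees}. Applying that proposition with $m = n-k$ gives
\begin{equation*}
\sum_{\substack{\lambda \vdash n \\ \lambda_1 = n-k}} q^{\sum_{i \geq 1}\lambda_i^2}\prod_{i \geq 1}{\lambda_i \brack \lambda_{i+1}}_q = q^{(n-k)^2 + k}{n-1 \brack n-k-1}_q = q^{(n-k)^2 + k}{n-1 \brack k}_q,
\end{equation*}
so that the total count reduces to $q^{k}\,\gamma_q(k)\,{n-1 \brack k}_q$.

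The remaining step is a routine simplification. Writing $\gamma_q(k) = q^{\binom{k}{2}}\prod_{i=1}^{k}(q^i-1)$ and
\begin{equation*}
{n-1 \brack k}_q = \frac{\prod_{i=1}^{k}(q^{n-i}-1)}{\prod_{i=1}^{k}(q^i-1)},
\end{equation*}
the $(q^i-1)$ factors cancel and one is left with $q^{k+\binom{k}{2}}\prod_{i=1}^{k}(q^{n-i}-1) = q^{\binom{k+1}{2}}\prod_{i=1}^{k}(q^{n-i}-1) = \prod_{i=1}^{k}(q^n - q^i)$, completing the proof. The only genuinely non-routine step is matching the exponent $\sum_{i\geq 2}\lambda_i^2$ appearing in $\sigma(\lambda)$ with the $\sum_i \lambda_i^2$ appearing in Proposition~\ref{prop:durfees}; everything else is either a direct appeal to earlier results in the paper or an algebraic tidy-up.
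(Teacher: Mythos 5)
Your proposal is correct and follows essentially the same route as the paper: decompose the set of simple maps on $W$ into the classes $\CC_{W,V}(\lambda,\emptyset)$ with $\lambda_1=n-k$, sum $\sigma(\lambda)$ via Proposition~\ref{prop:durfees} with $m=n-k$, and simplify $q^k\gamma_q(k){n-1\brack k}_q$ to $\prod_{i=1}^k(q^n-q^i)$. Your explicit bookkeeping of the $q^{\lambda_1^2}$ factor needed to match the exponent in Proposition~\ref{prop:durfees} is exactly the step the paper performs implicitly.
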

\begin{proof}
  The number of simple linear transformations with domain $W$ is equal to
  $$
\sum_{\substack{\lambda \vdash n\\ \lambda_1=n-k}}  \sigma(\lambda)=  \gamma_q(k) \sum_{\substack{\lambda \vdash n\\ \lambda_1=n-k}}  q^{\sum_{i\geq 2} \lambda_i^2} \prod_{i\geq 1}{\lambda_i \brack \lambda_{i+1}}_q 
$$
by Corollary \ref{cor:cwv}. Setting $m=n-k$ in Proposition \ref{prop:durfees} the sum on the right hand side above becomes
\begin{align*}
  q^k {n-1 \brack k}_q   \gamma_q(k)&=q^k  \frac{(q^{n-1}-1)\cdots (q^{n-1}-q^{k-1})}{(q^{k}-1)\cdots (q^{k}-q^{k-1})} \prod_{i=0}^{k-1}(q^k-q^i)\\
  &=\prod_{i=1}^k (q^n-q^i).\qedhere
\end{align*}
\end{proof}
The corollary above can also be obtained \cite[Cor. 2.12]{arora2019unimodular} by counting certain unimodular matrices over a finite field.               
\section{Arbitrary linear transformations defined on a subspace}  
In this section we extend the results obtained on the conjugacy class size of simple linear transformations to arbitrary maps in $\mathcal{L}(V)$. Let $T \in \mathcal{L}(V)$ be a fixed but arbitrary linear transformation with domain $W$ and let $U$ denote the maximal invariant subspace of $T$. Define a map $\hat{T}$ from the quotient space $ W/U$ into $V/U$ by  
 $$\hat{T} ( v+ U) = Tv + U. $$
 
Then $\hat{T}$ is well defined. If $v_1+ U =v_2 + U$ for some $v_1, v_2 \in W$ then $v_1 -v_2 \in U$ and consequently $T(v_1 -v_2) \in U$ since $U$ is $T$-invariant. It follows that $Tv_1 + U= Tv_2 + U$ and thus $\hat{T}$ is well defined. The linearity of $\hat{T}$ is an easy consequence of the fact that $T$ is linear.
\begin{lemma}
 \label{lem:quotientspace}
 Let $\mathcal{W} =\{v \in W : Tv \in W\} \mbox{ and } \hat{\mathcal{W}}= \{\alpha \in W/U : \hat{T}(\alpha) \in~W/U\}$. Then $\hat{\mathcal{W}} = \mathcal{W}/ U.$  
\end{lemma}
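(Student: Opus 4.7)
The plan is to verify the two inclusions $\mathcal{W}/U \subseteq \hat{\mathcal{W}}$ and $\hat{\mathcal{W}} \subseteq \mathcal{W}/U$ directly from the definitions. Before doing so, I need to confirm that $\mathcal{W}/U$ is a well-defined quotient, which requires $U \subseteq \mathcal{W}$. Since $U$ is $T$-invariant, we have $U \subseteq W$ (because $U$ is contained in the domain of $T$) and $T(U) \subseteq U \subseteq W$, so every $u \in U$ satisfies $u \in W$ and $Tu \in W$, giving $U \subseteq \mathcal{W}$.

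For the forward inclusion, I would pick $v + U \in \mathcal{W}/U$ with $v \in \mathcal{W}$. Then $v \in W$ forces $v + U \in W/U$, and $Tv \in W$ forces $\hat{T}(v+U) = Tv + U \in W/U$, so $v+U \in \hat{\mathcal{W}}$.

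For the reverse inclusion, take $\alpha \in \hat{\mathcal{W}}$. Since $\alpha \in W/U$, we may write $\alpha = v + U$ for some $v \in W$. The hypothesis $\hat{T}(\alpha) \in W/U$ means $Tv + U = w + U$ for some $w \in W$, hence $Tv - w \in U \subseteq W$, and therefore $Tv = w + (Tv - w) \in W$. This gives $v \in \mathcal{W}$, so $\alpha \in \mathcal{W}/U$.

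There is no real obstacle here; the only subtlety is to remember that the representative $v \in W$ chosen for a coset $\alpha \in W/U$ need not itself lie in $\mathcal{W}$ a priori, but the condition $\hat{T}(\alpha) \in W/U$ combined with $U \subseteq W$ forces $Tv \in W$ regardless of which representative is chosen. This is essentially the same well-definedness argument that was used to introduce $\hat{T}$.
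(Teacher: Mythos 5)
Your proof is correct and follows essentially the same route as the paper, which verifies the two inclusions via a single chain of equivalences resting on the observation $U\subseteq\mathcal{W}$. You simply unpack the step ``$Tv+U\in W/U\iff Tv\in W$'' in more detail (using $U\subseteq W$ to pass from $Tv-w\in U$ to $Tv\in W$), which the paper asserts without comment.
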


\begin{proof}
 Note that $U\subseteq \mathcal{W}$. We have
  \begin{align*}
    v+U\in \hat{\mathcal{W}}&\iff v+U\in W/U \mbox{ and } Tv+U \in W/U\\
                             &\iff v\in W \mbox{ and } Tv\in W\\
    &\iff v\in \mathcal{W}. \qedhere
  \end{align*}
\end{proof}
\begin{lemma}
  Let $W$ be a proper subspace of an $n$-dimensional vector space $V$ over $\Fq$ and let $T\in L(W,V)$. Let $U$ denote the maximal $T$-invariant subspace and suppose dim $U= d$. Suppose $T\in \mathcal{C}(\lambda,\I)$ for some integer partition $\lambda \vdash n-d$. Then the linear transformation $\hat{T}: W/U \rightarrow V/U $ defined by $\hat{T} ( v+ U) = Tv + U$ is simple and $\hat{T}\in \mathcal{C}(\lambda,\emptyset)$.
\end{lemma}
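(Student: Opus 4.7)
The plan is to verify two separate things: (i) that $\hat{T}$ is simple, and (ii) that $\hat{T}$ has defect partition equal to $\lambda$ (and empty invariant-factor set, which will follow from simplicity).

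For (i), note that $W \subsetneq V$ implies $W/U \subsetneq V/U$, so the domain of $\hat{T}$ is proper, and by the comment after Definition \ref{simpletransformation} it suffices to show that the maximal $\hat{T}$-invariant subspace of $V/U$ is zero. Suppose $Y \subseteq V/U$ is $\hat{T}$-invariant; then $Y \subseteq W/U$ and has the form $Y = X/U$ for a unique subspace $X$ with $U \subseteq X \subseteq W$. The invariance condition $\hat{T}(X/U) \subseteq X/U$ unpacks to: for every $x \in X$, $Tx + U \in X/U$, i.e.\ $Tx \in X$. Thus $X$ is a $T$-invariant subspace of $V$ containing $U$, and by maximality of $U$ we conclude $X = U$, so $Y = \{0\}$. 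Hence $\hat{T}$ is simple, and in particular $\I_{\hat T} = \emptyset$.

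For (ii), I would show by induction on $i \geq 1$ that the chain of subspaces associated with $\hat T$ satisfies $W_i(\hat T) = W_i(T)/U$. The base cases $i=0,1$ are immediate since $W_0(\hat T) = V/U$ and $W_1(\hat T) = W/U$. For the inductive step, Lemma \ref{lem:quotientspace} applied to the linear transformation $T\big|_{W_{i-1}(T)}\colon W_{i-1}(T) \to V$ in place of $T$ (with $W_i(T)$ playing the role of $W$ and $U$ still the maximal invariant subspace, which is contained in every $W_j(T)$) gives exactly
\[
W_{i+1}(\hat T) \;=\; \{\alpha \in W_i(\hat T) : \hat T(\alpha) \in W_i(\hat T)\} \;=\; W_{i+1}(T)/U,
\]
completing the induction.

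Consequently $\dim W_i(\hat T) = d_i - d$ for $i \geq 1$, which gives $\lambda_i(\hat T) = (d_{i-1}-d) - (d_i - d) = \lambda_i$ for $1 \le i \le \ell$, and the chain of subspaces of $\hat T$ stabilizes exactly at $i = \ell$, with stabilized subspace $W_\ell(T)/U = U/U = \{0\}$. Hence $\lambda_{\hat T} = \lambda$, and combined with $\I_{\hat T} = \emptyset$ we conclude $\hat T \in \mathcal{C}(\lambda,\emptyset)$.

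The main obstacle is the inductive identification $W_i(\hat T) = W_i(T)/U$; once this is established via a clean application of Lemma \ref{lem:quotientspace} at each level, the matching of defect dimensions and the triviality of the maximal invariant subspace of $\hat T$ follow immediately.
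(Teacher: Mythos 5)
Your proof is correct and follows essentially the same route as the paper: the core of both arguments is the inductive identification $W_i(\hat T)=W_i(T)/U$ via repeated application of Lemma \ref{lem:quotientspace}, from which the defect dimensions and the triviality of the maximal invariant subspace of $\hat T$ both follow. Your separate direct argument for simplicity in part (i) is fine but redundant once the chain identification is in hand (and note the restriction at the inductive step should be to $W_i(T)$, the subspace playing the role of $W$, rather than $W_{i-1}(T)$ --- a cosmetic slip only).
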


\begin{proof}
  To show that $\hat{T}$ is simple, it suffices to show that the maximal invariant subspace of $\hat{T}$ is the zero subspace. Let $\{W_i\}_{i=0}^{\ell}$ be the chain of subspaces associated with $T$ 
  with $W_\ell =U$. Similarly, there is a chain of subspaces $\{\hat{W}_i\}_{i=0}^{\ell'}$ associated with $\hat{T}$. 
  It follows by Lemma \ref{lem:quotientspace} that $ \hat{W}_2=W_2/ U$. By applying the lemma again to the restriction of $\hat{T}$ to $ W_2/ U,$ we obtain $\hat{W}_3=W_3/ U$. By repeated application of the lemma it is clear that $\hat{W}_i = W_i/ U$ for $0 \leq i \leq \ell$. This implies that  $\ell'= \ell$ and that the maximal invariant subspace  $\hat{W}_\ell$ of $\hat{T}$  is the zero subspace. Thus $\hat{T} $ is simple. Since
$$ \dim W_{j-1}/ U - \dim W_{j}/ U = \dim W_{j-1}- \dim W_{j} = \lambda_j $$
for $1 \leq j \leq \ell$, the sequence of defect dimensions of $\hat{T}$ is $\lambda$.
\end{proof}
\begin{definition}
  For $T\in \mathcal{L}(V)$, the map $\hat{T}$ defined above is called the \emph{simple part} of $T$.  
\end{definition}

\begin{definition}
  For $T\in \mathcal{L}(V)$, the \emph{operator part} of $T$ denotes the linear operator obtained by restricting $T$ to its maximal invariant subspace.
\end{definition}


Given a subspace $W$ of $V$ and any $T\in L(W,V)$, associate with it a pair $(\overline{T},\hat{T})$ where $\overline{T}$ denotes the operator part of $T$ and $\hat{T}$ denotes the simple part of $T$. The following proposition asserts that the number of linear transformations having prescribed simple and operator parts is a power of $q$.

\begin{proposition}
\label{prop:pair}
Let $U\subseteq W$ be subspaces of an $n$-dimensional vector space $V$ over $\Fq$ and suppose that the dimensions of $U$ and $W$ are $d$ and $k$ respectively.
Let $T_o$ be a linear operator on $U$ with ordered set of invariant factors  $\I$ and let $T_s \in L(W/U, V/U) $ be a simple linear transformation with defect dimensions $\lambda \vdash n-d$. The number of linear transformations $T\in L(W,V)$ with operator part $T_o$ and simple part $T_s$ 
is given by $q^{d(k-d)} $.
\end{proposition}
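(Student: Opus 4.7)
The plan is to realise the set of valid $T \in L(W,V)$ as a nonempty torsor over $\mathrm{Hom}(W/U, U)$, from which the count $q^{d(k-d)}$ follows immediately, as $|\mathrm{Hom}(W/U, U)| = q^{(k-d)d}$.

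First I would reformulate the condition: $T \in L(W,V)$ has operator part $T_o$ and simple part $T_s$ if and only if (i) $T|_U = T_o$ and (ii) $\pi_V \circ T = T_s \circ \pi_W$, where $\pi_V: V \to V/U$ and $\pi_W: W \to W/U$ are the canonical projections. The ``only if'' direction is immediate from the definitions of operator and simple part. For the converse, condition (i) makes $U$ a $T$-invariant subspace, so $U \subseteq M$ where $M$ denotes the maximal $T$-invariant subspace of $T$. Condition (ii) ensures $\hat T = T_s$, and then $M/U$ is a $\hat T$-invariant subspace of $W/U$; since $T_s$ is simple, $M/U = \{0\}$, forcing $M = U$. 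Thus $T|_U = T_o$ is indeed the operator part.

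Next I would exhibit the torsor structure. If $T_1, T_2 \in L(W,V)$ both satisfy (i)--(ii), then $T_1 - T_2$ vanishes on $U$ and takes values in $U$, so it descends to a unique element of $\mathrm{Hom}(W/U, U)$; conversely, given a valid $T$ and any $\psi \in \mathrm{Hom}(W/U, U)$, the map $T + \psi \circ \pi_W$ again satisfies (i)--(ii). Hence the solution set is either empty or a torsor over $\mathrm{Hom}(W/U, U)$, in which case it has cardinality exactly $q^{d(k-d)}$.

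Finally, I would rule out emptiness by constructing one valid $T$: pick a complement $W'$ of $U$ in $W$ and a basis $w_1, \ldots, w_{k-d}$ of $W'$, choose $v_i \in \pi_V^{-1}(T_s(\pi_W(w_i)))$ arbitrarily (possible since $\pi_V$ is surjective), and define $T$ to be $T_o$ on $U$ and $w_i \mapsto v_i$ on $W'$, extended by linearity. Linearity of both sides of (ii) ensures the equation holds on all of $W$ once it holds on the spanning set $U \cup \{w_1,\ldots,w_{k-d}\}$. The main point requiring care is the verification that simplicity of $T_s$ forces the maximal $T$-invariant subspace to equal $U$; the torsor observation and the construction of a single lift are then purely formal.
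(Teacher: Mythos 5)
Your proof is correct and is essentially the paper's argument in coordinate-free form: the paper fixes a basis $\alpha_1,\ldots,\alpha_k$ of $W$ extending one of $U$, observes that $T\alpha_i$ is determined by $T_s$ only up to an arbitrary element $\gamma_i\in U$ for $d+1\le i\le k$, and counts $q^{d(k-d)}$ choices, which is precisely your torsor over $\mathrm{Hom}(W/U,U)$. Your additional verification that simplicity of $T_s$ forces the maximal $T$-invariant subspace to equal $U$ exactly (so that $T_o$ genuinely is the operator part and $T_s$ the simple part), together with the explicit construction of one lift, addresses a point the paper's proof leaves implicit, and both are handled correctly.
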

\begin{proof}
  Let $\mathcal{B}=\{\alpha_1,\ldots,\alpha_d\}$ be an ordered basis for $U$. Extend $\mathcal{B}$ to a basis $\mathcal{B}'=\{\alpha_1,\ldots,\alpha_k\}$ for $W$. Let $T_o$ and $T_s$ be as in the statement of the theorem. If a linear transformation $T\in L(W,V)$ has operator part $T_o$, then $T$ is uniquely defined at each element of $\mathcal{B}$. It remains to define $T$ on each $\alpha_i$ for $d+1\leq i\leq k$. Suppose that $T_s(\alpha_i+U)=\beta_i+U$ for some $\beta_i\in V$ and $d+1\leq i\leq k$. Then $T\alpha_i+U=\beta_i+U$ for $d+1\leq i\leq k$. It therefore suffices to count maps $T$ satisfying
  $$
T(\alpha_i)=\beta_i+\gamma_i \mbox{ for some }\gamma_i \in U  \quad (d+1\leq i\leq k).
$$
The number of such maps is clearly $q^{d(k-d)}$.
\end{proof}

The function $\sigma(\lambda)$ defined in Corollary \ref{cor:cwv} counts the number of simple maps with defect dimensions $\lambda$ when $\lambda$ is a partition of a positive integer. As the simple part of any linear operator on $V$ is trivial, it is natural to extend the domain of definition of $\sigma(\lambda)$ to the empty partition by declaring $\sigma(\emptyset)=1$. 

\begin{theorem}
  \label{th:givenU}
Let $U\subseteq W$ be subspaces of an $n$-dimensional vector space $V$ over $\Fq$ and suppose $\dim U=d$ and $\dim W=k$. Let $\lambda \vdash n-d$ with $\lambda_1=n-k$ and $\I$ be an ordered set of invariant factors of degree $d$. The number of maps in $\CC_{W,V}(\lambda,\I)$ with maximal invariant subspace $U$ equals
 \begin{equation}
 \label{eqCC'}
  q^{d(k-d)}   \left|\CC(\I)\right|   \sigma(\lambda). 
 \end{equation}
\end{theorem}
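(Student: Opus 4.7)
The plan is to decompose each $T\in\CC_{W,V}(\lambda,\I)$ with maximal invariant subspace $U$ into its operator part $\overline{T}$ (the restriction of $T$ to $U$, which is a linear operator on $U$ with invariant factors $\I$ by the definition of $\I$) and its simple part $\hat{T}:W/U\to V/U$ (which, by the lemma immediately preceding the theorem, is simple with defect dimensions $\lambda$). I would then count such $T$ by first counting the admissible pairs $(\overline{T},\hat{T})$ and then multiplying by the fiber size from Proposition~\ref{prop:pair}.

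First, I would count the number of admissible operator parts. Since $U$ is a fixed $d$-dimensional space, the linear operators on $U$ with prescribed invariant factors $\I$ (where $\deg\I = d$) form the class $\CC(\I)$, so the count is exactly $|\CC(\I)|$.

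Next, I would count the number of admissible simple parts $\hat{T}\in L(W/U,V/U)$ with defect dimensions $\lambda$. Here $\dim(V/U)=n-d$, $\dim(W/U)=k-d$, and $\lambda\vdash n-d$ with $\lambda_1 = n-k = (n-d)-(k-d)$, so the hypotheses of Corollary~\ref{cor:cwv} are satisfied for the ambient space $V/U$. Because $\sigma$ is defined purely in terms of $\lambda$ (the ambient dimension is encoded as $|\lambda|$), the count is $\sigma(\lambda)$.

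Finally, by Proposition~\ref{prop:pair}, given any specific operator part $\overline{T}$ and simple part $\hat{T}$, there are exactly $q^{d(k-d)}$ maps $T\in L(W,V)$ realizing this pair. Multiplying the three counts yields \eqref{eqCC'}. The one point that needs verification (but is immediate) is that every $T$ obtained in this way actually has $U$ as its maximal invariant subspace, not something strictly larger: since $\hat{T}$ is simple, its maximal invariant subspace in $V/U$ is $\{0\}$, and pulling back to $V$ shows the maximal $T$-invariant subspace is exactly $U$. This sanity check is the only mild obstacle; once it is in place, the argument is just the product of the three counts established above.
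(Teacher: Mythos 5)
Your proposal is correct and follows essentially the same route as the paper: decompose $T$ into its operator part and simple part, count each via $|\CC(\I)|$ and $\sigma(\lambda)$ respectively, and multiply by the fiber size $q^{d(k-d)}$ from Proposition~\ref{prop:pair}. The sanity check you add (that the reconstructed $T$ has maximal invariant subspace exactly $U$) is a worthwhile observation that the paper leaves implicit in the preceding lemma.
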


\begin{proof}
There are precisely $\left|\CC(\I)\right|$ possibilities for the operator part of $T$. Setting $W'=W/U$ and $V'=V/U$, the simple part of $T$ can be chosen in $\left| \CC_{W',V'}(\lambda, \emptyset)\right|=\sigma(\lambda)$ ways. The result now follows from Proposition \ref{prop:pair}. 
\end{proof}

\begin{corollary}
  \label{cor:sizeofcwv}
Let $W$ be a $k$-dimensional subspace of an $n$-dimensional vector space $V$ over $\Fq$. Let $\I$ be an ordered set of invariant factors with $\deg \I=d\leq \dim W$ and let $\lambda\vdash n-d$ with $\lambda_1=n-k$. Then 
 \begin{equation}
 \label{eqC'}
        \left|\CC_{W,V}(\lambda,\I )\right|=     q^{d(k-d)}{k \brack d}_q  \left|\CC(\I)\right| \sigma(\lambda).
\end{equation}
  
\end{corollary}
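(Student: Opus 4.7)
The plan is to reduce Corollary \ref{cor:sizeofcwv} to Theorem \ref{th:givenU} by partitioning $\CC_{W,V}(\lambda,\I)$ according to the maximal $T$-invariant subspace.

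First I would observe that every $T\in \CC_{W,V}(\lambda,\I)$ determines a unique maximal $T$-invariant subspace $U_T$, and by the definition of the pair $(\lambda,\I)$ this subspace has dimension $\deg \I = d$ and is contained in $W$ (since it equals $W_\ell(T)$ in the chain of subspaces associated with $T$, which is a decreasing chain starting from $W_1=W$). Hence
\[
\CC_{W,V}(\lambda,\I)=\bigsqcup_{\substack{U\subseteq W\\ \dim U=d}}\{T\in \CC_{W,V}(\lambda,\I):U_T=U\},
\]
the union being disjoint.

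Next I would apply Theorem \ref{th:givenU} to each summand: for a fixed $d$-dimensional subspace $U\subseteq W$, the number of maps $T\in \CC_{W,V}(\lambda,\I)$ whose maximal invariant subspace is exactly $U$ equals $q^{d(k-d)}|\CC(\I)|\sigma(\lambda)$. Crucially, this count depends only on the dimensions $d$ and $k$ and on the data $(\lambda,\I)$, not on the particular subspace $U$ chosen inside $W$ (this uniformity is already baked into the statement of Theorem \ref{th:givenU}).

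Finally, since the number of $d$-dimensional subspaces of the $k$-dimensional space $W$ is ${k \brack d}_q$, summing yields
\[
|\CC_{W,V}(\lambda,\I)|={k \brack d}_q\cdot q^{d(k-d)}|\CC(\I)|\sigma(\lambda),
\]
which is the desired formula. There is no real obstacle here beyond the bookkeeping just described; the only point that needs verifying is the very first one, namely that the maximal invariant subspace of any $T$ in $\CC_{W,V}(\lambda,\I)$ sits inside $W$ and has dimension $d$, but both facts are immediate from the construction of the chain $W_i(T)$ in Section \ref{sec:invariants} and from the degree condition $\deg \I = d$.
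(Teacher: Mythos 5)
Your proposal is correct and follows essentially the same route as the paper: the paper's proof is a one-line appeal to Theorem \ref{th:givenU} together with the observation that the maximal invariant subspace can be any of the ${k \brack d}_q$ $d$-dimensional subspaces of $W$. You have merely spelled out the disjoint decomposition and the containment $U_T\subseteq W$ in more detail, which is fine.
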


\begin{proof}
The corollary follows from Theorem \ref{th:givenU} as there are ${k \brack d}_q$ possibilities for the maximal invariant subspace.   
\end{proof}
In the case $W=V$, the above expression for $\left|\CC_{W,V}(\lambda,\I )\right|$ reduces to $|\mathcal{C}(\I)|$, the number of square matrices whose invariant factors are given by $\I$. The next corollary determines the size of the similarity classes in $\mathcal{L}(V)$. 
\begin{corollary}
  \label{cor:simsize}
Let $V$ be a vector space over $\Fq$ of dimension $n$. If $\deg \I=d$ and $\lambda\vdash n-d$, then 
$$
 \left| \CC(\lambda,\I )\right| = q^{d(k-d)} {n \brack k}_q {k \brack d}_q  \left|\CC(\I)\right| \sigma(\lambda),
$$
where $k=n-\lambda_1$. 
\end{corollary}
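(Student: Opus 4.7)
The plan is to decompose $\CC(\lambda,\I)$ as a disjoint union indexed by the domain and then invoke Corollary~\ref{cor:sizeofcwv}. First I would observe that every $T\in \CC(\lambda,\I)$ has domain of dimension exactly $k = n-\lambda_1$. This is forced by the definition of the defect dimensions: with $W_0=V$ and $W_1 = W$ the domain of $T$, one has $\lambda_1 = d_0 - d_1 = n - \dim W$, so $\dim W = n - \lambda_1 = k$ is a similarity invariant. Consequently
\[
\CC(\lambda,\I) \;=\; \bigsqcup_{\substack{W\subseteq V\\ \dim W = k}} \CC_{W,V}(\lambda,\I).
\]

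Second, I would note that the expression for $|\CC_{W,V}(\lambda,\I)|$ supplied by Corollary~\ref{cor:sizeofcwv} depends only on the dimension $k$ of $W$, not on the particular subspace $W$ chosen. Hence every summand in the displayed disjoint union contributes the same number, namely $q^{d(k-d)}{k \brack d}_q |\CC(\I)|\,\sigma(\lambda)$.

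Third, since the number of $k$-dimensional subspaces of $V$ is ${n \brack k}_q$, multiplying yields
\[
|\CC(\lambda,\I)| \;=\; {n \brack k}_q \cdot q^{d(k-d)} {k \brack d}_q |\CC(\I)|\,\sigma(\lambda),
\]
which is exactly the claimed identity. There is no genuine obstacle here: the corollary is a bookkeeping step that assembles the earlier results. The only point meriting care is the justification that $\dim W = k$ is uniquely determined by $\lambda$, so that the partition of $\CC(\lambda,\I)$ by domain is indexed exactly by the $k$-dimensional subspaces of $V$; this is immediate from the definition of $\lambda_1$ given at the start of Section~\ref{sec:invariants}.
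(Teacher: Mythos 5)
Your proposal is correct and follows exactly the paper's own argument: partition $\CC(\lambda,\I)$ by the domain subspace, apply Corollary~\ref{cor:sizeofcwv} to each piece, and multiply by the count ${n \brack k}_q$ of $k$-dimensional subspaces. Your added justification that $\dim W = n-\lambda_1$ is forced by the definition of $\lambda_1$ is a welcome (if minor) elaboration of the paper's one-line observation.
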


\begin{proof}
Any map in $\mathcal{C}(\lambda,\I)$ has domain of dimension $k$. The result follows from Corollary \ref{cor:sizeofcwv} by summing $\left|\mathcal{C}_{W,V}(\lambda,\I)\right|$ over all $k$-dimensional subspaces of $V$. 
\end{proof}

The following result was proved in \cite[Thm. 3.8]{RAM2017146}. 
\begin{corollary}
Let $W$ be a fixed $k$-dimensional subspace of an $n$-dimensional vector space $V$ over $\Fq$. The number of linear transformations $T\in L(W,V)$ for which the operator part of $T$ has invariant factors $\I$ with $\deg \I=d$ equals
  $$
 {k \brack d}_q \left|\CC(\I)\right| \prod_{i=d+1}^k(q^n-q^i).
  $$
\end{corollary}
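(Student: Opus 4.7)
The plan is to sum the formula of Corollary \ref{cor:sizeofcwv} over all admissible defect-dimension partitions. Every $T\in L(W,V)$ whose operator part has invariants $\I$ lies in $\CC_{W,V}(\lambda, \I)$ for exactly one partition $\lambda\vdash n-d$ with $\lambda_1=n-k$ (by the general setup of the chain of subspaces, where the largest defect dimension is always $\dim V-\dim W$). Hence, pulling out the $\lambda$-independent factors from Corollary \ref{cor:sizeofcwv}, the count sought equals
$$q^{d(k-d)}\,{k \brack d}_q\,|\CC(\I)|\sum_{\substack{\lambda\vdash n-d\\ \lambda_1=n-k}}\sigma(\lambda).$$

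The substantive step is to recognize that the remaining sum is precisely what Corollary \ref{cor:numsimp} computes for the quotient pair $W/U\subseteq V/U$, where $U$ is any $d$-dimensional subspace of $W$. Indeed, $\sum_\lambda \sigma(\lambda)$ over partitions of $n-d$ with largest part $n-k$ counts the total number of simple linear transformations from a $(k-d)$-dimensional subspace into an $(n-d)$-dimensional vector space. Applying Corollary \ref{cor:numsimp} with $n\mapsto n-d$ and $k\mapsto k-d$ evaluates this sum to $\prod_{i=1}^{k-d}(q^{n-d}-q^i)$.

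The final step is a routine reindexing. Setting $j=i+d$ gives
$$\prod_{i=1}^{k-d}(q^{n-d}-q^i)=\prod_{j=d+1}^{k}q^{-d}(q^n-q^j)=q^{-d(k-d)}\prod_{j=d+1}^{k}(q^n-q^j),$$
so the factor $q^{-d(k-d)}$ cancels the leading $q^{d(k-d)}$, producing the claimed formula ${k \brack d}_q|\CC(\I)|\prod_{i=d+1}^k(q^n-q^i)$. The only mild obstacle is bookkeeping: one must confirm that $\sigma(\lambda)$, as defined in Corollary \ref{cor:cwv}, depends only on the partition $\lambda$ (through $\gamma_q(|\lambda|-\lambda_1)$ and the product of $q$-binomials), so its value is the same whether the ambient $(V,W)$ is the original pair or the quotient pair $(V/U,W/U)$. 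Once that is noted, the corollary is a direct substitution into results already established.
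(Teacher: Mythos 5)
Your proposal is correct and follows essentially the same route as the paper: sum $\left|\CC_{W,V}(\lambda,\I)\right|$ from Corollary \ref{cor:sizeofcwv} over partitions $\lambda\vdash n-d$ with $\lambda_1=n-k$, evaluate $\sum_\lambda\sigma(\lambda)$ via Corollary \ref{cor:numsimp} applied to the quotient pair, and reindex so that $q^{d(k-d)}$ cancels. You merely spell out the reindexing and the $\lambda$-only dependence of $\sigma$ more explicitly than the paper does.
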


\begin{proof}
  By Corollary \ref{cor:sizeofcwv} the desired number of linear transformations equals
  \begin{align*}
    \sum_{\substack{\lambda \vdash n-d\\\lambda_1=n-k}}\left|\mathcal{C}_{W,V}(\lambda,\I)\right|&= q^{d(k-d)}{k \brack d}_q  \left|\CC(\I)\right| \sum_{\substack{\lambda \vdash n-d\\\lambda_1=n-k}}\sigma(\lambda)\\
                                                                                                 &= q^{d(k-d)} {k \brack d}_q \left|\mathcal{C}(\I)\right| \prod_{j=1}^{k-d}(q^{n-d}-q^j)\\
    &= {k \brack d}_q \left|\CC(\I)\right| \prod_{i=d+1}^k(q^n-q^i).
  \end{align*}
The second equality above is a consequence of Corollary \ref{cor:numsimp}.
\end{proof} 


\end{document}